\definecolor{frenchblue}{rgb}{0.0, 0.45, 0.73}
\algnewcommand{\LineComment}[1]{\State {\color{frenchblue}\(\triangleright\) #1}}
\def\R{\mathbb{R}}
\def\0{\mathbf{0}}
\def\1{\mathbf{1}}
\def\f{\mathbf{f}}
\def\h{\mathbf{h}}
\def\v{\mathbf{v}}
\def\R{\mathbb{R}}
\def\E{\mathcal{E}}
\def\F{\mathcal{F}}
\def\M{\mathcal{M}}
\def\V{\mathcal{V}}
\def\I{\mathtt{I}}
\def\B{\mathtt{B}}
\def\C{\mathtt{C}}
\def\X{\mathtt{X}}
\def\Y{\mathtt{Y}}
\def\trace{\mathrm{trace}}
\newcommand{\argmin}{\operatornamewithlimits{argmin}}
\newtheorem{theorem}{Theorem}
\newtheorem{corollary}{Corollary}
\title{Theoretical Foundation of the Stretch Energy Minimization for Area-Preserving Mappings}
\author{Mei-Heng Yueh\\
\href{mailto:yue@ntnu.edu.tw}{yue@ntnu.edu.tw} }
\date{Department of Mathematics\\ National Taiwan Normal University}
\begin{document}

\maketitle

\begin{abstract}
The stretch energy is a fully nonlinear energy functional that has been applied to the numerical computation of area-preserving mappings. However, this approach lacks theoretical support and the analysis is complicated due to the full nonlinearity of the functional. In this paper, we provide a theoretical foundation of the stretch energy minimization for the computation of area-preserving mappings, including a neat formulation of the gradient of the functional, and the proof of the minimizers of the functional being area-preserving mappings. In addition, the geometric interpretation of the stretch energy is also provided to better understand this energy functional. Furthermore, numerical experiments are demonstrated to validate the effectiveness and accuracy of the stretch energy minimization for the computation of square-shaped area-preserving mappings of simplicial surfaces. 
\end{abstract}

\section{Introduction}
\label{intro}

An area-preserving mapping is also called an equiareal mapping or authalic mapping, which is a bijective mapping that preserves the area. In application aspects, area-preserving mappings could serve as parameterizations of surfaces in 3D space. It has been applied to processing issues in computer vision and graphics such as resampling, remeshing, and registration of surface mesh models \cite{FlHo05,ShPR06,HoLe07}. Classical methods for computing area-preserving mapping includes stretch-minimizing method \cite{SaSG01,YoBS04}, Lie advection method \cite{ZoHG11}, optimal mass transportation method \cite{DoTa10,ZhSG13,SuCQ16}, and diffusion-based method \cite{ChRy18}.

In recent years, a series of efficient numerical algorithms \cite{Yueh17,YuLW17,YuLW19,YuLL19,YuHL20,YuLL20,KuLY21} for the computation of parameterizations of surfaces and $3$-manifolds have been well-developed based on minimizing nonlinear energy functionals of the form 
$$
E(f) = \frac{1}{2} \trace\left( 
\begin{bmatrix}
f(v_1) & \cdots & f(v_n)
\end{bmatrix}
L(f) \begin{bmatrix}
f(v_1)^\top \\
\vdots\\
f(v_n)^\top
\end{bmatrix} \right),
$$
where $f:\M\to\R^d$ is a simplicial mapping defined on a simplicial $d$-complex with the image $f(v_\ell)$ being a column vector in $\R^d$, for $\ell=1, \ldots, n$, and $L(f)$ is a Laplacian matrix with weights being dependent on $f$. In particular, the area-preserving mapping is computed by minimizing the stretch energy \cite{Yueh17,YuLW19}. Noting that the stretch energy functional is fully nonlinear, still, the minimizer can be effectively obtained by iteratively approximating the critical point of the functional, and the resulting mapping preserves the area well. Compared to other state-of-the-art algorithms, stretch energy minimization for area-preserving mappings has advantages in both effectiveness and accuracy. However, this approach still lacks a theoretical foundation.

\paragraph{Contribution}

In this paper, we establish the theoretical foundation of the stretch energy minimization for the computation of area-preserving mappings. 
The contributions of this paper are three-fold. 
\begin{itemize}
\item[(i)] We derive a neat gradient formula of the stretch energy functional that is easy to implement. 
\item[(ii)] We provide the geometric interpretation of the stretch energy functional. 
\item[(iii)] We prove that minimizers of the stretch energy functional are area-preserving mappings. 
\end{itemize}
In addition, we demonstrate the associated efficient energy minimization algorithm for the computation of square-shaped area-preserving mappings of simply connected open surfaces to validate the effectiveness and accuracy of the stretch energy minimization.

\paragraph{Notations}

In this paper, we use the following notations.

\begin{itemize}[label={$\bullet$}]
\item Bold letters, for instance, $\mathbf{f}$, denote real-valued vectors or matrices.
\item Capital letters, for instance, $L$, denote real-valued matrices.
\item Typewriter letters, for instance, $\mathtt{I}$, $\mathtt{B}$, denote ordered sets of indices. 
\item $\mathbf{f}_i$ denotes the $i$th entry or row of the vector or matrix $\mathbf{f}$.
\item $\mathbf{f}_\mathtt{I}$ denotes the subvector or submatrix of $\mathbf{f}$ composed of $\mathbf{f}_i$, for $i\in\mathtt{I}$.
\item $L_{i,j}$ denotes the $(i,j)$th entry of the matrix $L$.
\item $L_{\mathtt{I},\mathtt{J}}$ denotes the submatrix of $L$ composed of $L_{i,j}$, for $i\in\mathtt{I}$ and $j\in\mathtt{J}$.
\item $\mathbb{R}$ denotes the set of real numbers. 
\item $\left[{v}_0, \ldots, {v}_k\right]$ denotes the $k$-simplex with vertices ${v}_0, \ldots, {v}_k$. 
\item $|[{v}_0, \ldots, {v}_k]|$ denotes the volume of the $k$-simplex $\left[v_0, \ldots, v_k\right]$.
\item $\M$ denotes a simplicial $2$-complex of $n$ vertices and $m$ triangular faces. 
\item $\mathbf{0}$ and $\mathbf{1}$ denote the zero and one vectors or matrices of appropriate sizes.
\end{itemize}

\paragraph{Outline}
The remaining parts of the paper is organized as follows. 
In Section \ref{sec:SimplicialSurface}, we introduce the notation of simplicial surfaces and mappings as well as their matrix representations. Then, we provide the theoretical foundation of the stretch energy minimization in Section \ref{sec:SEM}. 
The associated algorithm for the computation of area-preserving mappings is demonstrated in Section \ref{sec:SEMAlg}. 
Numerical results are demonstrated and discussed in Subsection \ref{sec:Numerical} to validate the effectiveness and accuracy of the stretch energy minimization for the computation of square-shaped area-preserving mappings. 
Ultimately, concluding remarks are given in Section \ref{sec:CR}.

\section{Simplicial surfaces and mappings} \label{sec:SimplicialSurface}

In numerical computation, surfaces and mappings are usually approximated by simplicial surfaces and mappings. 
A simplicial surface can be represented as a triangular mesh composed of vertices, edges, and triangular faces, and a simplicial mapping can be interpreted as a change of embedding of vertices of the simplicial surface.

\subsection{Simplicial surfaces}

A simplicial surface or triangular mesh is a simplicial $2$-complex $\M$ composed of vertices 
\[
\V(\M) = \left\{\v_\ell = (v_\ell^1, v_\ell^2, v_\ell^3 )^\top \in \mathbb{R}^3 \right\}_{\ell=1}^n,
\]
edges
$$
\E(\M) = \left\{ [\v_i,\v_j] \subset\R^3 \right\},
$$
and triangular faces
\[
\F(\M) = \left\{ \tau_\ell = [\v_{i_\ell}, \v_{j_\ell}, \v_{k_\ell}] \subset \mathbb{R}^3\right\}_{\ell=1}^m,
\]
where the bracket $[\v_{i_\ell}, \v_{j_\ell}, \v_{k_\ell}]$ denotes the convex hull of vertices $\{ \v_{i_\ell}, \v_{j_\ell}, \v_{k_\ell} \}$, defined as 
\[
\tau_\ell = [\v_{i_\ell}, \v_{j_\ell}, \v_{k_\ell}]
:= \left\{\alpha_{1} \v_{i_\ell} + \alpha_{2} \v_{j_\ell} + \alpha_{3} \v_{k_\ell} \mid \alpha_1+\alpha_2+\alpha_3 = 1 \right\}\subset\mathbb{R}^3.
\] 
In other words, a simplicial surface $\M=\cup_{\ell=1}^m \tau_\ell$ is a surface composed of triangular faces. 
In practice, the sets of vertices $\V(\M)$ and triangular faces $\F(\M)$ are represented as matrices
\[
V = \begin{bmatrix}
\v_1^\top \\
\vdots \\
\v_n^\top
\end{bmatrix}
= \begin{bmatrix}
v_1^1 & v_1^2 & v_1^3\\
\vdots & \vdots & \vdots \\
v_n^1 & v_n^2 & v_n^3
\end{bmatrix}
:=\begin{bmatrix}
\v^1 & \v^2 & \v^3
\end{bmatrix}
~~
\text{and}
~~
F = \begin{bmatrix}
i_1 & j_1 & k_1\\
\vdots & \vdots & \vdots \\
i_m & j_m & k_m
\end{bmatrix},
\]
respectively.

\subsection{Simplicial mappings}

A simplicial mapping on a triangular mesh $\M=\cup_{\ell=1}^m \tau_\ell$ is a particular piecewise affine mapping $f:\M\to\R^2$ solely determined by the mapping of vertices $f|_{\V(\M)}:\V(\M)\to\R^2$. More explicitly, suppose $f|_{\V(\M)}$ is given by $f(v_\ell) = \f_\ell\in\mathbb{R}^2$, for $\ell=1,\ldots,n$. For the point $\v\in\tau_\ell\in\F(\M)$, 
\[
f|_{\tau_\ell}(\v) =  \frac{1}{|\tau_\ell|} \left(|[\v,\v_{j_\ell},\v_{k_\ell}]| \, \f_{i_\ell} + |[\v_{i_\ell},\v,\v_{k_\ell}]| \, \f_{j_\ell} + |[\v_{i_\ell},\v_{j_\ell},\v]| \, \f_{k_\ell} \right).
\]
In practice, the function $f$ is represented as a real-valued matrix 
$$
\f = \begin{bmatrix}
\f_1^\top \\
\vdots\\
\f_n^\top
\end{bmatrix}
= \begin{bmatrix}
f_1^1 & f_1^2 \\
\vdots & \vdots\\
f_n^1 & f_n^2
\end{bmatrix}
:= \begin{bmatrix}
\f^1 & \f^2
\end{bmatrix}.
$$
In other words, a simplicial map on $\M$ is an embedding of $\M$ on the parametric domain $f(\M)$.

\section{Analysis of the stretch energy functional} \label{sec:SEM}

The stretch energy functional \cite{Yueh17,YuLW19} for simplicial surfaces is defined as
\begin{equation}\label{eq:E_S}
{E}_S({f}) = \frac{1}{2} \trace \left(\mathbf{f}^\top L_S({f}) \, \mathbf{f} \right)
= \frac{1}{2} \left( \mathbf{f}^{1\top} L_S({f}) \, \mathbf{f}^1 + \mathbf{f}^{2\top} L_S({f}) \, \mathbf{f}^2\right),
\end{equation}
where $L_S({f})$ is the weighted Laplacian matrix given by
\begin{equation} \label{eq:L_S}
{[L_S(f)]}_{i,j} =
   \begin{cases}
   \displaystyle
   -\frac{1}{2}\sum_{[v_i,v_j,v_k]\in\F(\M)} \frac{\cot(\theta_{i,j}^k(f))  \, |f([v_i,v_j,v_k])|}{ |[v_i,v_j,v_k]|}  &\mbox{if $[{v}_i,{v}_j]\in\mathcal{E}(\mathcal{M})$,}\\
   \displaystyle
   -\sum_{\ell\neq i} [L_S(f)]_{i,\ell} &\mbox{if $j = i$,}\\
   0 &\mbox{otherwise}
   \end{cases}
\end{equation}
in which $\theta_{i,j}^k(f)$ is the angle opposite to the edge $[\f_i,\f_j]$ at the point $\f_k$ on $f(\mathcal{M})$, as illustrated in Figure \ref{fig:cotWeight}. 
The stretch energy functional is defined in \cite{Yueh17,YuLW19} by imposing the area-preserving condition to the cotangent weights in the discrete Dirichlet energy.

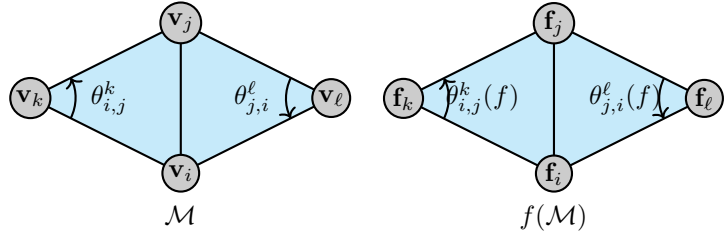
\begin{figure}
\center
\begin{tabular}{cc}
\begin{tikzpicture}[thick,scale=1]
\coordinate (v_i) at (0,0);
\coordinate (v_j) at (0,2);
\coordinate (v_k) at (2,1);
\coordinate (v_l) at (-2,1);
\filldraw[cyan!20] (v_i) -- (v_j) -- (v_k);
\filldraw[cyan!20] (v_i) -- (v_j) -- (v_l);
\pic[draw, ->, "$\theta_{i,j}^k$", angle eccentricity=1.75, angle radius=0.6cm]{angle = v_i--v_l--v_j};
\pic[draw, ->, "$\theta_{j,i}^\ell$", angle eccentricity=1.75, angle radius=0.6cm]{angle = v_j--v_k--v_i};
\draw{
(v_i) -- (v_j) -- (v_k) -- (v_i) -- (v_l) -- (v_j)
};
\tikzstyle{every node}=[circle, draw, fill=black!20,
                        inner sep=1pt, minimum width=2pt]
\draw{
(0,0) node{${\v}_i$}
(0,2) node{${\v}_j$}
(2,1) node{${\v}_\ell$}
(-2,1) node{${\v}_k$}
};
\end{tikzpicture}
&
\begin{tikzpicture}[thick,scale=1]
\coordinate (v_i) at (0,0);
\coordinate (v_j) at (0,2);
\coordinate (v_k) at (2,1);
\coordinate (v_l) at (-2,1);
\filldraw[cyan!20] (v_i) -- (v_j) -- (v_k);
\filldraw[cyan!20] (v_i) -- (v_j) -- (v_l);
\pic[draw, ->, "$\theta_{i,j}^k(f)$", angle eccentricity=1.75, angle radius=0.6cm]{angle = v_i--v_l--v_j};
\pic[draw, ->, "$\theta_{j,i}^\ell(f)$", angle eccentricity=1.75, angle radius=0.6cm]{angle = v_j--v_k--v_i};
\draw{
(v_i) -- (v_j) -- (v_k) -- (v_i) -- (v_l) -- (v_j)
};
\tikzstyle{every node}=[circle, draw, fill=black!20,
                        inner sep=1pt, minimum width=2pt]
\draw{
(0,0) node{$\f_i$}
(0,2) node{$\f_j$}
(2,1) node{$\f_\ell$}
(-2,1) node{$\f_k$}
};
\end{tikzpicture} \\
$\M$ & $f(\M)$
\end{tabular}
\caption{Illustrations for the angles in the cotangent weights of the Laplacian matrices \eqref{eq:L_D} and \eqref{eq:L_S}.}
\label{fig:cotWeight}
\end{figure}

\subsection{Gradient of the stretch energy}
We now derive a simplified formulation for the gradient of the stretch energy \eqref{eq:E_S}. 
Noting that the Laplacian matrix $L_S(f)$ in \eqref{eq:L_S} is dependent on $f$ so that the gradient 
$$
\nabla_{\f} E_S(f) := \begin{bmatrix}
\nabla_{\f^1} E_S(f) \\
\nabla_{\f^2} E_S(f)
\end{bmatrix}
$$
with
{\footnotesize
\begin{equation} \label{eq:GradE_S}
[\nabla_{\f^s} E_S(f)]_\ell = \frac12 \left(\sum_{j=1}^n [L_S(f)]_{\ell,j} f_j^s + \sum_{i=1}^n f_i^s [L_S(f)]_{i,\ell} + \sum_{i=1}^n \sum_{j=1}^n f_i^1 f_j^1 \frac{\partial}{\partial f_\ell^s}[L_S(f)]_{i,j} + \sum_{i=1}^n \sum_{j=1}^n f_i^2 f_j^2 \frac{\partial}{\partial f_\ell^s}[L_S(f)]_{i,j} \right)
\end{equation}}
involves a 3D tensor $\frac{\partial}{\partial f_\ell^s}[L_S(f)]_{i,j}$, which would cause difficulty for sparse computation. 
Fortunately, we can prove in the following theorem that the third and fourth terms in the right-hand-side of \eqref{eq:GradE_S} can be written in a neat formulation that can be implemented easily.

\begin{theorem} \label{thm:1}
The gradient of the stretch energy functional \eqref{eq:E_S} is formulated as
\begin{equation} \label{eq:thm1}
\nabla_{\f} E_S(f):=
\begin{bmatrix}
\nabla_{\f^1} E_S(f) \\
\nabla_{\f^2} E_S(f) 
\end{bmatrix}
=
\begin{bmatrix}
L_S(f) \, \f^1 + \h^1 \\
L_S(f) \, \f^2 + \h^2
\end{bmatrix}
\end{equation}
with
\begin{align} \label{eq:thm1_h1}
\h^1_i &= \frac{1}{2} \sum_{\tau=[\v_i,\v_j,\v_k]\in N_{\mathcal{F}}(\v_i)} \frac{|f(\tau)|}{|\tau|} (f_j^2-f_k^2), 
\end{align}
and
\begin{align} \label{eq:thm1_h2}
\h^2_i &= \frac{1}{2} \sum_{\tau=[\v_i,\v_j,\v_k]\in N_{\mathcal{F}}(\v_i)} \frac{|f(\tau)|}{|\tau|} (f_k^1-f_j^1),
\end{align}
where $N_{\mathcal{F}}(\v_i) = \{\tau \mid \v_i\subset\tau\}$ denotes the set of neighboring triangular faces of $\v_i$. 
\end{theorem}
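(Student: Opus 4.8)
The plan is to exploit the one identity that removes all the nonlinearity at once: for the image triangle $f(\tau)$ with $\tau=[\v_i,\v_j,\v_k]$, the cotangent of the angle at $\f_k$ obeys the standard formula $\cot\theta_{i,j}^k(f)\,|f(\tau)| = \tfrac12(\f_i-\f_k)\cdot(\f_j-\f_k)$. Substituting this into \eqref{eq:L_S} rewrites every off-diagonal weight as $[L_S(f)]_{i,j} = -\tfrac{1}{4}\sum_{[\v_i,\v_j,\v_k]}(\f_i-\f_k)\cdot(\f_j-\f_k)/|\tau|$, an explicit quadratic polynomial in the entries of $\f$. In particular the offending $3$-tensor $\tfrac{\partial}{\partial f_\ell^s}[L_S(f)]_{i,j}$ becomes concrete and is supported only on the faces of $N_{\mathcal F}(\v_\ell)$, so the entire computation localizes face-by-face.

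First I would dispose of the first two sums in \eqref{eq:GradE_S}: since $L_S(f)$ is symmetric (the angle opposite $[\f_i,\f_j]$ is the same viewed from either endpoint), $\tfrac12\big(\sum_j [L_S(f)]_{\ell,j}f_j^s + \sum_i f_i^s[L_S(f)]_{i,\ell}\big) = [L_S(f)\,\f^s]_\ell$. This already produces the first summand of \eqref{eq:thm1} and reduces the theorem to the single claim that the remaining tensor terms collapse, namely $\sum_{t=1}^2\sum_{i,j} f_i^t f_j^t \,\tfrac{\partial}{\partial f_\ell^s}[L_S(f)]_{i,j} = 2\,\h_\ell^s$.

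To prove this I would reorganize the double sum $\sum_{i,j} f_i^t f_j^t[L_S(f)]_{i,j}=\f^{t\top}L_S(f)\,\f^t$ as a sum over faces (each $\tau$ entering through its three edges), freeze the quadratic factors, differentiate the now-polynomial weights in $f_\ell^s$, and then re-substitute; only the faces in $N_{\mathcal F}(\v_\ell)$ survive. Summing over $t=1,2$ and using the orientation formula $\partial_{f_i^1}|f(\tau)| = \tfrac12(f_j^2-f_k^2)$, $\partial_{f_i^2}|f(\tau)| = \tfrac12(f_k^1-f_j^1)$, the cross terms combine into exactly $\tfrac{|f(\tau)|}{|\tau|}(f_j^2-f_k^2)$ and $\tfrac{|f(\tau)|}{|\tau|}(f_k^1-f_j^1)$, i.e. $2\h_\ell^1$ and $2\h_\ell^2$. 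I expect the cleanest conceptual route to run through the closed form $E_S(f)=\sum_{\tau}|f(\tau)|^2/|\tau|$, which follows from the Heron-type identity $\sum_{e\in\tau}\cot\theta_e(f)\,|\f_i-\f_j|^2 = 4|f(\tau)|$; differentiating this closed form directly returns $\nabla_{\f^s}E_S=2\h^s$, and the stated split $L_S(f)\f^s+\h^s$ then follows from the per-face vector identity $\cot\theta_{i,j}^k(f)(\f_i-\f_j)+\cot\theta_{i,k}^j(f)(\f_i-\f_k)=\big(f_j^2-f_k^2,\;f_k^1-f_j^1\big)^\top$, which says precisely that $L_S(f)\f^s=\h^s$.

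The main obstacle is entirely the $f$-dependence of $L_S$: naively \eqref{eq:GradE_S} carries a dense-looking $3$-tensor, and the whole content of the theorem is that this tensor, once contracted against $\f^t\f^{t\top}$ and summed over $t$, degenerates to a first-order face quantity. The cotangent--area identity is what defuses it; after that, the only genuine work is the per-face polynomial bookkeeping, where I would track the orientation signs carefully (the determinant expression for $|f(\tau)|$, and hence for $\partial_{f_\ell^s}|f(\tau)|$, is valid only for a consistently oriented image) and verify that the three edge contributions of each incident face combine into the single difference $f_j^2-f_k^2$ (resp. $f_k^1-f_j^1$) with no residual quadratic terms left over.
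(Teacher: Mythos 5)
Your proposal is correct and follows essentially the same route as the paper: rewrite the weights via $\cot(\theta_{i,j}^k(f))\,|f(\tau)| = \tfrac12(\f_i-\f_k)^\top(\f_j-\f_k)$ so that $L_S(f)$ becomes an explicit per-face quadratic in $\f$, use symmetry to reduce the first two sums of \eqref{eq:GradE_S} to $L_S(f)\,\f^s$, and then check face-by-face that the contracted derivative tensor collapses to the first-order quantities $\h^s$. The alternative ``cleanest conceptual route'' you sketch—differentiating the closed form $E_S(f)=\sum_\tau |f(\tau)|^2/|\tau|$ and then splitting via $L_S(f)\,\f^s=\h^s$—is precisely the content of the paper's Theorem \ref{thm:2} and Corollary \ref{cor:1}, so it is a legitimate reorganization rather than a new idea.
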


\begin{proof}
For each triangular face $\tau = [\v_i,\v_j,\v_k]\in\F(\M)$, we let
\begin{equation} \label{eq:omega}
\omega_{i,j,k}(f) = \frac{\cot(\theta_{i,j}^k(f))  \, |f([\v_i,\v_j,\v_k])|}{2|[\v_i,\v_j,\v_k]|}.
\end{equation}
Then, $L_S(f)$ is written as
\begin{equation*}
{[L_S(f)]}_{i,j} =
   \begin{cases}
   -\sum_{[v_i,v_j,v_k]\in\F(\M)} \omega_{i,j,k}(f)  &\mbox{if $[{v}_i,{v}_j]\in\mathcal{E}(\mathcal{M})$,}\\
   -\sum_{\ell\neq i} [L_S(f)]_{i,\ell} &\mbox{if $j = i$,}\\
   0 &\mbox{otherwise.}
   \end{cases}
\end{equation*}
Noting that $E_S$ can be written as
\begin{align*}
E_S(f) & = \frac12 \text{trace}\left(\f^\top L_S(f) \,\f\right) = \frac12 \sum_{s=1}^2 \sum_{i=1}^n\sum_{j=1}^n f_i^s [L_S(f)]_{i,j} f_j^s.
\end{align*}
The entries of gradient $\nabla_\f E_S(f) = ((\nabla_{\f^1} E_S(f))^\top, (\nabla_{\f^2} E_S(f))^\top)^\top$ can be written as
{\footnotesize
\begin{align}
[\nabla_{\f^s} E(f)]_\ell & = \frac12 \frac{\partial}{\partial f_\ell^s} \left( \sum_{i=1}^n\sum_{j=1}^n f_i^1 [L_S(f)]_{i,j} f_j^1 + \sum_{i=1}^n\sum_{j=1}^n f_i^2 [L_S(f)]_{i,j} f_j^2 \right) \nonumber\\
&= \frac12 \left(\sum_{j=1}^n [L_S(f)]_{\ell,j} f_j^s + \sum_{i=1}^n f_i^s [L_S(f)]_{i,\ell} +  \sum_{i=1}^n \sum_{j=1}^n f_i^1 f_j^1 \frac{\partial}{\partial f_\ell^s}[L_S(f)]_{i,j} + \sum_{i=1}^n \sum_{j=1}^n f_i^2 f_j^2 \frac{\partial}{\partial f_\ell^s}[L_S(f)]_{i,j} \right) \nonumber\\
&= \sum_{j=1}^n [L_S(f)]_{\ell,j} f_j^s + \frac{1}{2} \sum_{i=1}^n \sum_{j=1}^n f_i^1 f_j^1 \frac{\partial}{\partial f_\ell^s}[L_S(f)]_{i,j} + \frac{1}{2} \sum_{i=1}^n \sum_{j=1}^n f_i^2 f_j^2 \frac{\partial}{\partial f_\ell^s}[L_S(f)]_{i,j}, ~ \ell=1, \ldots, n, \label{eq:GradE_S_ell}
\end{align}}
for $s=1,2$. 
To further simplify $\frac{\partial}{\partial f_\ell^s}L_S(f)$ in \eqref{eq:GradE_S_ell}, we reformulate $\omega_{i,j,k}$ in \eqref{eq:omega} as
\begin{align*}
\omega_{i,j,k}(f) &= 
\frac{\cot(\theta_{i,j}^k(f))  \, |f([\v_i,\v_j,\v_k])|}{2|[\v_i,\v_j,\v_k]|} \\
&= \frac{\cos(\theta_{i,j}^k(f))  \, |[\f_i,\f_j,\f_k]|}{2\sin(\theta_{i,j}^k(f))|[\v_i,\v_j,\v_k]|} && \text{since $|f([\v_i,\v_j,\v_k])| = |[\f_i,\f_j,\f_k]|$} \\
&= \frac{|[\f_k,\f_i]| \, |[\f_k,\f_j]| \cos(\theta_{i,j}^k(f))  \, |[\f_i,\f_j,\f_k]|}{2|[\f_k,\f_i]| \, |[\f_k,\f_j]| \sin(\theta_{i,j}^k(f)) |[\v_i,\v_j,\v_k]|} \\
&= \frac{|[\f_k,\f_i]| \, |[\f_k,\f_j]| \cos(\theta_{i,j}^k(f))}{4|[\v_i,\v_j,\v_k]|} && \text{since $|[\f_i,\f_j,\f_k]|=\frac{1}{2}|[\f_k,\f_i]| \, |[\f_k,\f_j]| \sin(\theta_{i,j}^k(f))$}\\
&= \frac{1}{4|[\v_i,\v_j,\v_k]|} (\f_i-\f_k)^\top (\f_j-\f_k).
\end{align*}
From the definition of the stretch energy \eqref{eq:L_S}, each triangular face $\tau = [\v_i, \v_j, \v_k]$ contributes a $3\times 3$ partial submatrix
{\footnotesize
\begin{equation} \label{eq:L_tau}
L_{\tau}(f) = L_{[\v_i,\v_j,\v_k]}(f) = \frac{1}{4|\tau|}
\begin{bmatrix}
\begin{array}{c}
(\f_i-\f_k)^\top(\f_j-\f_k)\\
~~ +(\f_i-\f_j)^\top(\f_k-\f_j)
\end{array} & -(\f_i-\f_k)^\top(\f_j-\f_k) & - (\f_i-\f_j)^\top(\f_k-\f_j) \\
-(\f_i-\f_k)^\top(\f_j-\f_k) & \begin{array}{c}
(\f_j-\f_i)^\top(\f_k-\f_i) \\
~~ +(\f_i-\f_k)^\top(\f_j-\f_k)
\end{array} & -(\f_i-\f_j)^\top(\f_i-\f_k) \\
-(\f_i-\f_j)^\top(\f_k-\f_j) & -(\f_j-\f_i)^\top(\f_k-\f_i) & 
\begin{array}{c}
(\f_i-\f_j)^\top(\f_i-\f_k) \\
~~ +(\f_i-\f_j)^\top(\f_k-\f_j) 
\end{array}
\end{bmatrix}.
\end{equation}}
By taking the partial derivatives of \eqref{eq:L_tau} with respect to $f_i^s$, $f_j^s$ and $f_k^s$, respectively, we obtain 
\begin{align}
\frac{\partial}{\partial f_i^s}L_{\tau}(f) &= \frac{1}{4|\tau|}
\begin{bmatrix}
      0 &        f_k^s - f_j^s &        f_j^s - f_k^s \\
f_k^s - f_j^s &    2f_i^s - 2f_k^s & f_j^s - 2f_i^s + f_k^s \\
f_j^s - f_k^s & f_j^s - 2f_i^s + f_k^s &    2f_i^s - 2f_j^s
\end{bmatrix}, \label{eq:L_i}\\
\frac{\partial}{\partial f_j^s}L_{\tau}(f) &= \frac{1}{4|\tau|}
\begin{bmatrix}
     2f_j^s - 2f_k^s & f_k^s - f_i^s & f_i^s - 2f_j^s + f_k^s \\
       f_k^s - f_i^s &       0   &        f_i^s - f_k^s \\
f_i^s - 2f_j^s + f_k^s & f_i^s - f_k^s &    2f_j^s - 2f_i^s
\end{bmatrix}, \label{eq:L_j}\\
\frac{\partial}{\partial f_k^s}L_{\tau}(f) &= \frac{1}{4|\tau|}
\begin{bmatrix}
     2f_k^s - 2f_j^s & f_i^s + f_j^s - 2f_k^s & f_j^s - f_i^s \\
f_i^s + f_j^s - 2f_k^s &      2f_k^s - 2f_i^s & f_i^s - f_j^s \\
       f_j^s - f_i^s &        f_i^s - f_j^s &         0
\end{bmatrix}. \label{eq:L_k}
\end{align}
Let $\mathbf{f}_\tau^s = \mathbf{f}_{[\v_i,\v_j,\v_k]}^s := (f_i^s, f_j^s, f_k^s)^\top$. 
With the formulation of \eqref{eq:L_i}--\eqref{eq:L_k}, it can be verify by direct computation that 
\begin{align} 
\mathbf{f}_\tau^{s\top} \frac{\partial}{\partial f_i^s}L_\tau(f) \, \mathbf{f}_\tau^s &= 0, \label{eq:fLf_i} \\ 
\mathbf{f}_\tau^{s\top} \frac{\partial}{\partial f_j^s}L_\tau(f) \, \mathbf{f}_\tau^s &= 0, \label{eq:fLf_j} \\  
\mathbf{f}_\tau^{s\top} \frac{\partial}{\partial f_k^s}L_\tau(f) \, \mathbf{f}_\tau^s &= 0, \label{eq:fLf_k} \\
\mathbf{f}_\tau^{s\top} \frac{\partial}{\partial f_\ell^s}L_\tau(f) \, \mathbf{f}_\tau^s &= 0, ~ \ell\notin\{i,j,k\}, \label{eq:fLf_ell}
\end{align}
for $s = 1,2$, and
{\footnotesize
\begin{align}
\mathbf{f}_\tau^{t\top} \frac{\partial}{\partial f_i^s}L_\tau(f) \, \mathbf{f}_\tau^t 
&= \frac{1}{2|\tau|} (f_j^t-f_k^t) (f_i^s f_j^t - f_j^s f_i^t + f_k^s f_i^t - f_i^s f_k^t + f_j^s f_k^t - f_k^s f_j^t)
= \frac{|f(\tau)|}{|\tau|} (-1)^t (f_j^t-f_k^t), \label{eq:gLg_i} \\ 
\mathbf{f}_\tau^{t\top} \frac{\partial}{\partial f_j^s}L_\tau(f) \, \mathbf{f}_\tau^t 
&= \frac{1}{2|\tau|} (f_k^t-f_i^t) (f_i^s f_j^t - f_j^s f_i^t + f_k^s f_i^t - f_i^s f_k^t + f_j^s f_k^t - f_k^s f_j^t)
= \frac{|f(\tau)|}{|\tau|} (-1)^t (f_k^t-f_i^t), \label{eq:gLg_j} \\ 
\mathbf{f}_\tau^{t\top} \frac{\partial}{\partial f_k^s}L_\tau(f) \, \mathbf{f}_\tau^t 
&= \frac{1}{2|\tau|} (f_i^t-f_j^t) (f_i^s f_j^t - f_j^s f_i^t + f_k^s f_i^t - f_i^s f_k^t + f_j^s f_k^t - f_k^s f_j^t)
= \frac{|f(\tau)|}{|\tau|} (-1)^t (f_i^t-f_j^t), \label{eq:gLg_k} \\
\mathbf{f}_\tau^{t\top} \frac{\partial}{\partial f_\ell^s}L_\tau(f) \, \mathbf{f}_\tau^t &= 0, ~ \ell\notin\{i,j,k\}, \label{eq:gLg_ell}
\end{align}}
for $(s,t)\in\{(1,2),(2,1)\}$.

Noting that the matrix $L_S(f)$ is assembled by $\{L_\tau(f)\}_{\tau\in\mathcal{F}(\mathcal{M})}$, therefore, \eqref{eq:thm1}--\eqref{eq:thm1_h2} hold by \eqref{eq:GradE_S_ell} and \eqref{eq:fLf_i}--\eqref{eq:gLg_ell}. 
\end{proof}

The formulas of $\h^1$ and $\h^2$ in \eqref{eq:thm1_h1} and \eqref{eq:thm1_h2} look pretty and familiar. Indeed, they are exactly the same as $L_S(f)\, \f^1$ and $L_S(f)\, \f^2$, respectively, which is stated in the following corollary. 

\begin{corollary} \label{cor:1}
The vectors $\h^1$ and $\h^2$ in \eqref{eq:thm1_h1} and \eqref{eq:thm1_h2} satisfy
$$
\h^1 = L_S(f)\, \f^1 
~\text{ and }~
\h^2 = L_S(f)\, \f^2, 
$$
respectively. As a result, the gradient of $E_S$ can also be formulated as
\begin{equation} \label{eq:cor1}
\nabla_{\f} E_S(f):=
\begin{bmatrix}
\nabla_{\f^1} E_S(f) \\
\nabla_{\f^2} E_S(f) 
\end{bmatrix}
=
2\begin{bmatrix}
L_S(f) \, \f^1 \\
L_S(f) \, \f^2
\end{bmatrix}.
\end{equation}
\end{corollary}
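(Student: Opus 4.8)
The plan is to prove the two identities $\h^1 = L_S(f)\,\f^1$ and $\h^2 = L_S(f)\,\f^2$ directly, triangle by triangle, and then substitute into \eqref{eq:thm1}. Since $L_S(f)$ is assembled from the local matrices $L_\tau(f)$ of \eqref{eq:L_tau}, the $\v_i$-entry of $L_S(f)\,\f^1$ is the sum over $\tau=[\v_i,\v_j,\v_k]\in N_{\mathcal F}(\v_i)$ of the corresponding local row contribution. Using that each $L_\tau(f)$ has zero row sums, together with the reformulation $\omega_{i,j,k}(f) = \frac{1}{4|\tau|}(\f_i-\f_k)^\top(\f_j-\f_k)$ established in the proof of Theorem \ref{thm:1}, I would first record the local identity
\[
\big[L_\tau(f)\,\f_\tau^1\big]_{\v_i} = \omega_{i,j,k}(f)\,(f_i^1-f_j^1) + \omega_{i,k,j}(f)\,(f_i^1-f_k^1).
\]

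Next I would rewrite the two weights in cotangent form via \eqref{eq:omega}, namely $\omega_{i,j,k}(f) = \frac{|f(\tau)|}{2|\tau|}\cot\theta_{i,j}^k(f)$ and $\omega_{i,k,j}(f) = \frac{|f(\tau)|}{2|\tau|}\cot\theta_{i,k}^j(f)$, so that the claim $\h^1=L_S(f)\,\f^1$ reduces, per triangle, to the purely planar identity
\[
\cot\theta_{i,j}^k(f)\,(f_i^1-f_j^1) + \cot\theta_{i,k}^j(f)\,(f_i^1-f_k^1) = f_j^2 - f_k^2,
\]
with the companion identity $\cot\theta_{i,j}^k(f)(f_i^2-f_j^2) + \cot\theta_{i,k}^j(f)(f_i^2-f_k^2) = f_k^1-f_j^1$ handling $\h^2$. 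Both are the two coordinate components of the single statement that, in the planar triangle $[\f_i,\f_j,\f_k]$,
\[
\cot\theta_{i,k}^j(f)\,(\f_i-\f_k) + \cot\theta_{i,j}^k(f)\,(\f_i-\f_j) = \begin{bmatrix}0 & 1\\ -1 & 0\end{bmatrix}(\f_j-\f_k),
\]
i.e. the indicated cotangent-weighted combination of the two edges emanating from $\f_i$ is the quarter-turn of the opposite edge $\f_j-\f_k$. I would prove this last identity either by substituting $\cot\theta = (\text{adjacent inner product})/(2|f(\tau)|)$ and $|f(\tau)| = \frac12|(\f_j-\f_i)\times(\f_k-\f_i)|$ and expanding, or, more transparently, by recalling that the gradient on the image triangle of the affine coordinate function at $\f_i$ equals $\frac{1}{2|f(\tau)|}$ times the quarter-turn of $\f_j-\f_k$. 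Summing the per-triangle identity over $N_{\mathcal F}(\v_i)$ then gives $[L_S(f)\,\f^1]_i = \h^1_i$ and $[L_S(f)\,\f^2]_i = \h^2_i$; feeding these into \eqref{eq:thm1} yields \eqref{eq:cor1}.

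The main obstacle is the bookkeeping of signs and orientation: the identity holds only when $|f(\tau)|$ is read as the signed area consistent with the vertex ordering $(\v_i,\v_j,\v_k)$, and this is precisely what forces the asymmetry between $f_j^2-f_k^2$ in $\h^1$ and $f_k^1-f_j^1$ in $\h^2$. I would therefore fix a consistent orientation of $\M$ at the outset so that every local signed area carries the same sign, and verify the quarter-turn identity in that convention. As an orientation-independent consistency check, Euler's relation for the degree-four homogeneous functional $E_S$ gives $\f^{1\top}\h^1 + \f^{2\top}\h^2 = \f^{1\top}L_S(f)\,\f^1 + \f^{2\top}L_S(f)\,\f^2$, a scalar constraint that the claimed vector equalities must satisfy and that guards against a sign error in the computation.
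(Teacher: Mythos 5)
Your proof is correct, and it shares the paper's overall structure --- decompose $L_S(f)$ into the per-triangle blocks $L_\tau(f)$ and verify a local identity at each vertex of each face --- but the local step is carried out via a genuinely different lemma. The paper simply multiplies the explicit $3\times 3$ matrix \eqref{eq:L_tau}, whose entries are already in inner-product form, against $\f_\tau^s$ and observes that each entry factors as (an edge difference in the other coordinate) times the determinant $2|f(\tau)|$; no trigonometry is needed. You instead convert the weights back to cotangent form via \eqref{eq:omega} and reduce everything to the classical planar identity that $\cot\theta_{i,k}^j(f)\,(\f_i-\f_k)+\cot\theta_{i,j}^k(f)\,(\f_i-\f_j)$ equals the quarter-turn of the opposite edge $\f_j-\f_k$ --- equivalently, that the cotangent Laplacian of the coordinate functions of a planar triangulation returns rotated boundary-edge sums. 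Your route is slightly more roundabout (the proof of Theorem \ref{thm:1} had already traded cotangents for inner products, and you trade back), and it does require the orientation bookkeeping you flag, since the sign of $|f(\tau)|$ and the direction of the quarter-turn must be tied to the vertex ordering; the paper's direct expansion sidesteps this because the signed determinant appears automatically. What your version buys is conceptual transparency: it exhibits $\h^s=L_S(f)\,\f^s$ as an instance of a standard discrete-differential-geometry fact rather than an algebraic coincidence, and your Euler-relation check ($E_S$ is homogeneous of degree $4$, so $\f^{1\top}\h^1+\f^{2\top}\h^2=\f^{1\top}L_S(f)\,\f^1+\f^{2\top}L_S(f)\,\f^2$) is a worthwhile safeguard that the paper does not include.
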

\begin{proof}
Let $\tau=[\v_i,\v_j,\v_k]\in\F(\M)$. 
By multiplying $L_{\tau}(f)$ in \eqref{eq:L_tau} with $\f_{\tau}^s:=(f_i^s, f_j^s, f_k^s)^\top$, $s=1,2$, we obtain 
$$
L_{\tau}(f) \, \f_{\tau}^1 = \frac{1}{4|\tau|} \begin{bmatrix}
(f_j^2-f_k^2) (f_i^1 f_j^2 - f_j^1 f_i^2 + f_k^1 f_i^2 - f_i^1 f_k^2 + f_j^1 f_k^2 - f_k^1 f_j^2) \\
(f_k^2-f_i^2) (f_i^1 f_j^2 - f_j^1 f_i^2 + f_k^1 f_i^2 - f_i^1 f_k^2 + f_j^1 f_k^2 - f_k^1 f_j^2) \\
(f_i^2-f_j^2) (f_i^1 f_j^2 - f_j^1 f_i^2 + f_k^1 f_i^2 - f_i^1 f_k^2 + f_j^1 f_k^2 - f_k^1 f_j^2) 
\end{bmatrix}
= \frac{|f(\tau)|}{2|\tau|} \begin{bmatrix}
f_j^2-f_k^2 \\
f_k^2-f_i^2\\
f_i^2-f_j^2 
\end{bmatrix}
$$
and
$$
L_{\tau}(f) \, \f_{\tau}^2 = \frac{1}{4|\tau|} \begin{bmatrix}
(f_k^1-f_j^1) (f_i^1 f_j^2 - f_j^1 f_i^2 + f_k^1 f_i^2 - f_i^1 f_k^2 + f_j^1 f_k^2 - f_k^1 f_j^2) \\
(f_i^1-f_k^1) (f_i^1 f_j^2 - f_j^1 f_i^2 + f_k^1 f_i^2 - f_i^1 f_k^2 + f_j^1 f_k^2 - f_k^1 f_j^2) \\
(f_j^1-f_i^1) (f_i^1 f_j^2 - f_j^1 f_i^2 + f_k^1 f_i^2 - f_i^1 f_k^2 + f_j^1 f_k^2 - f_k^1 f_j^2) 
\end{bmatrix}
= \frac{|f(\tau)|}{2|\tau|} \begin{bmatrix}
f_k^1-f_j^1 \\
f_i^1-f_k^1\\
f_j^1-f_i^1 
\end{bmatrix}.
$$
By summing the formulas over every $\tau\in\F(\M)$, we obtain
\begin{align*}
L_S(f) \,\f^1 &= \sum_{\tau=[\v_i,\v_j,\v_k]\in N_{\mathcal{F}}(\v_i)} \frac{|f(\tau)|}{2|\tau|} (f_j^2-f_k^2), \end{align*}
and
\begin{align*}
L_S(f) \,\f^2 &= \sum_{\tau=[\v_i,\v_j,\v_k]\in N_{\mathcal{F}}(\v_i)} \frac{|f(\tau)|}{2|\tau|} (f_k^1-f_j^1),
\end{align*}
which are exactly the same as the formulas of $\h^1$ and $\h^2$ in \eqref{eq:thm1_h1} and \eqref{eq:thm1_h2}.
\end{proof}

\subsection{Geometric interpretation of the stretch energy}

Next, we show in the following theorem that the stretch energy $E_S$ can be written as the sum of squares of the image area of triangular faces divided by the triangular face area, which is a geometric interpretation of the stretch energy. 

\begin{theorem}\label{thm:2}
The stretch energy defined in \eqref{eq:E_S} can be reformulated as
\begin{equation*} 
E_S(f) = \sum_{\tau\in\F(\M)} \frac{|f(\tau)|^2}{|\tau|}.
\end{equation*}
\end{theorem}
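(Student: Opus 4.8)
The plan is to exploit the face-by-face structure of $L_S(f)$ that was already established in the proof of Theorem~\ref{thm:1}, together with the explicit matrix--vector products computed in the proof of Corollary~\ref{cor:1}. Since $L_S(f)$ is assembled from the $3\times3$ blocks $L_\tau(f)$ of \eqref{eq:L_tau}, the quadratic form defining $E_S$ in \eqref{eq:E_S} decomposes over faces as
$$
E_S(f) = \frac12\sum_{s=1}^2 \f^{s\top} L_S(f)\,\f^s = \frac12\sum_{\tau\in\F(\M)}\sum_{s=1}^2 \f_\tau^{s\top} L_\tau(f)\,\f_\tau^s ,
$$
so it suffices to show that each triangular face contributes exactly $|f(\tau)|^2/|\tau|$ to the total.

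Next I would reuse the two vectors $L_\tau(f)\,\f_\tau^1$ and $L_\tau(f)\,\f_\tau^2$ already computed inside the proof of Corollary~\ref{cor:1}, namely $L_\tau(f)\,\f_\tau^1 = \frac{|f(\tau)|}{2|\tau|}\,(f_j^2-f_k^2,\ f_k^2-f_i^2,\ f_i^2-f_j^2)^\top$ and the analogous expression for $\f_\tau^2$. Forming the inner product with $\f_\tau^1$ (respectively $\f_\tau^2$) then yields
$$
\f_\tau^{1\top} L_\tau(f)\,\f_\tau^1 = \frac{|f(\tau)|}{2|\tau|}\bigl[f_i^1(f_j^2-f_k^2)+f_j^1(f_k^2-f_i^2)+f_k^1(f_i^2-f_j^2)\bigr],
$$
and a parallel expression for $s=2$.

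The key step is to recognize the bracketed polynomial as twice the signed area of the image triangle $[\f_i,\f_j,\f_k]$, i.e.\ the shoelace/determinant identity $f_i^1(f_j^2-f_k^2)+f_j^1(f_k^2-f_i^2)+f_k^1(f_i^2-f_j^2)=2|f(\tau)|$ --- which is precisely the determinant factor that already appears in Corollary~\ref{cor:1}. Substituting gives $\f_\tau^{1\top} L_\tau(f)\,\f_\tau^1 = |f(\tau)|^2/|\tau|$, and the same computation for the second coordinate produces the identical value. Hence each face contributes $\frac12\bigl(|f(\tau)|^2/|\tau| + |f(\tau)|^2/|\tau|\bigr) = |f(\tau)|^2/|\tau|$, and summing over $\F(\M)$ gives the asserted formula.

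The only delicate point --- and the step I would watch most carefully --- is the orientation bookkeeping in the shoelace identity: the determinant yields a \emph{signed} area, so I must confirm that its sign is consistent with the convention under which $|f(\tau)|$ enters $L_\tau(f)$ (the same convention already fixed in Corollary~\ref{cor:1}), so that the two occurrences of $|f(\tau)|$ multiply to $|f(\tau)|^2$ rather than $-|f(\tau)|^2$. Once that sign is pinned down, there is no genuine analytic obstacle: the argument reduces to one algebraic identity per face followed by a summation.
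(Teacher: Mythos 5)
Your proposal is correct and follows essentially the same route as the paper: decompose the quadratic form face by face into the blocks $L_\tau(f)$ of \eqref{eq:L_tau} and invoke the determinant--area identity $|f(\tau)|=\tfrac12(f_i^1 f_j^2 - f_j^1 f_i^2 + f_k^1 f_i^2 - f_i^1 f_k^2 + f_j^1 f_k^2 - f_k^1 f_j^2)$. The only difference is cosmetic: the paper computes $\tfrac12\sum_s \f_\tau^{s\top}L_\tau(f)\,\f_\tau^s$ directly as $\tfrac{1}{4|\tau|}$ times the \emph{square} of that determinant, which makes the orientation issue you flag manifestly harmless, whereas you reach the same square by routing through the products $L_\tau(f)\,\f_\tau^s$ from Corollary~\ref{cor:1}.
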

\begin{proof}
By the explicit formula of $L_\tau(f)$ in \eqref{eq:L_tau}, a direct derivation yields that
\begin{align*}
E_S(f) &= \frac{1}{2} \sum_{s=1}^2 \sum_{[\v_i,\v_j,\v_k]\in\mathcal{F}(\mathcal{M})} \begin{bmatrix} f_i^s & f_j^s & f_k^s \end{bmatrix} L_{[\v_i,\v_j,\v_k]}(f) \begin{bmatrix} f_i^s \\ f_j^s \\ f_k^s \end{bmatrix} \\
&= \sum_{[\v_i,\v_j,\v_k]\in\mathcal{F}(\mathcal{M})} \frac{1}{4|[\v_i,\v_j,\v_k]|} (f_i^1 f_j^2 - f_j^1 f_i^2 + f_k^1 f_i^2 - f_i^1 f_k^2 + f_j^1 f_k^2 - f_k^1 f_j^2)^2 \\
&= \sum_{\tau\in\F(\M)} \frac{|f(\tau)|^2}{|\tau|}.
\end{align*}
The last equality held by the identity
$$
|f([\v_i,\v_j,\v_k])| = \frac{1}{2}(f_i^1 f_j^2 - f_j^1 f_i^2 + f_k^1 f_i^2 - f_i^1 f_k^2 + f_j^1 f_k^2 - f_k^1 f_j^2).
$$
\end{proof}

In particular, when $f$ is an area-preserving mapping, the stretch energy would be the area of its image, as stated in the following corollary. 

\begin{corollary}
For an area-preserving mapping $f:M\to\R^2$, 
$$
E_S(f) = |f(\M)|.
$$
\end{corollary}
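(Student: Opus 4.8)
The plan is to derive this directly from the geometric reformulation in Theorem \ref{thm:2}, since that theorem already expresses $E_S(f)$ entirely in terms of the face areas $|\tau|$ and their images $|f(\tau)|$. The only additional ingredient is a precise translation of the hypothesis that $f$ is area-preserving into a statement about these quantities. In the simplicial setting this should mean that $f$ preserves the area of every triangular face, i.e.\ $|f(\tau)| = |\tau|$ for each $\tau\in\F(\M)$; equivalently, the local area-scaling ratio $|f(\tau)|/|\tau|$ is identically $1$. I would state this condition explicitly at the outset.

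With that condition in hand, the computation is a one-line substitution. Starting from Theorem \ref{thm:2},
$$
E_S(f) = \sum_{\tau\in\F(\M)} \frac{|f(\tau)|^2}{|\tau|} = \sum_{\tau\in\F(\M)} \frac{|f(\tau)|\,|\tau|}{|\tau|} = \sum_{\tau\in\F(\M)} |f(\tau)| = |f(\M)|,
$$
where the middle equality replaces one factor of $|f(\tau)|$ by $|\tau|$ using the area-preserving hypothesis, and the final equality uses that the images $f(\tau)$ tile $f(\M)$ with disjoint interiors so that their areas sum to $|f(\M)|$.

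I do not expect any genuine obstacle here: once Theorem \ref{thm:2} is available, the result is immediate. The only point deserving care is the definition of area-preserving, namely whether it is taken as $|f(\tau)|=|\tau|$ per face, or more weakly as $|f(\tau)|/|\tau|$ being a constant $c$ across all faces. In the latter case the same substitution yields $E_S(f) = c\,|f(\M)|$, so one would additionally need $c=1$, which holds precisely when $|f(\M)|=|\M|$. It is worth remarking that the constant-ratio (equiareal) condition is exactly the equality case of the Cauchy--Schwarz estimate $|f(\M)|^2 \le E_S(f)\,|\M|$, which connects this corollary to the paper's broader theme that minimizers of $E_S$ are area-preserving; but for the corollary as stated, the direct substitution above suffices.
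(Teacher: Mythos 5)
Your argument is correct and is essentially identical to the paper's own proof: both translate area-preservation into $|f(\tau)|=|\tau|$ for every face and then substitute into the formula of Theorem \ref{thm:2} to obtain $E_S(f)=\sum_{\tau\in\F(\M)}|f(\tau)|=|f(\M)|$. The extra remarks on the constant-ratio variant are fine but not needed for the statement as given.
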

\begin{proof}
Since $f$ preserves the area, $|f(\tau)| = |\tau|$, for every $\tau\in\F(\M)$. 
By Theorem \ref{thm:2}, 
$$
E_S(f) = \sum_{\tau\in\F(\M)} |f(\tau)| = |f(\M)|.
$$
\end{proof}

\subsection{Minimization of the stretch energy}

We now provide the foundation of the stretch energy minimization for the computation of area-preserving mappings, which is stated in the following theorem. 

\begin{theorem} \label{thm:3}
Given a simplicial surface $\M$. Under the constraint that the total area of the surface remains unchanged, the minimal value of $E_S$ occurs only at area-preserving mappings, i.e., 
$$
f = \argmin_{|g(\M)|=|\M|} E_S(g)
$$
if and only if $|f(\tau)|=|\tau|$, for every $\tau\in\F(\M)$.
\end{theorem}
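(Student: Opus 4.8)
The plan is to combine the geometric reformulation of Theorem~\ref{thm:2} with the Cauchy--Schwarz inequality, so that the whole statement reduces to the equality case of that inequality. By Theorem~\ref{thm:2}, every simplicial mapping $g$ satisfies $E_S(g)=\sum_{\tau\in\F(\M)}|g(\tau)|^2/|\tau|$, so the constrained problem $\argmin_{|g(\M)|=|\M|}E_S(g)$ is nothing more than minimizing a weighted sum of squares of the image areas $|g(\tau)|$ subject to a linear constraint on their total.

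First I would rewrite the constraint purely in terms of face areas. Using the additivity of area over faces, $|g(\M)|=\sum_{\tau\in\F(\M)}|g(\tau)|$ and $|\M|=\sum_{\tau\in\F(\M)}|\tau|$, the feasibility condition $|g(\M)|=|\M|$ becomes $\sum_{\tau}|g(\tau)|=\sum_{\tau}|\tau|$. The only modeling point that deserves an explicit remark here is this additivity: it presumes that the image areas accumulate over faces without cancellation, which I would record as the standing orientation convention.

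Next I would apply the Cauchy--Schwarz inequality to the vectors with entries $|g(\tau)|/\sqrt{|\tau|}$ and $\sqrt{|\tau|}$, indexed by $\tau\in\F(\M)$, which gives
\[
\Bigl(\sum_{\tau}|g(\tau)|\Bigr)^2
=\Bigl(\sum_{\tau}\tfrac{|g(\tau)|}{\sqrt{|\tau|}}\,\sqrt{|\tau|}\Bigr)^2
\le\Bigl(\sum_{\tau}\tfrac{|g(\tau)|^2}{|\tau|}\Bigr)\Bigl(\sum_{\tau}|\tau|\Bigr)
=E_S(g)\,|\M|.
\]
Invoking the constraint $\sum_{\tau}|g(\tau)|=|\M|$ then yields the uniform lower bound $E_S(g)\ge|\M|$ over all feasible $g$ (equivalently, this is the statement that the weighted average of the ratios $|g(\tau)|/|\tau|$ has nonnegative variance). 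Moreover, equality in Cauchy--Schwarz holds exactly when the two vectors are proportional, i.e. $|g(\tau)|=c\,|\tau|$ for a single constant $c$ independent of $\tau$; substituting back into the constraint forces $c=1$, hence $|g(\tau)|=|\tau|$ for every $\tau\in\F(\M)$.

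Finally I would read off the two directions of the equivalence from this bound. For the ``if'' direction, a mapping $f$ with $|f(\tau)|=|\tau|$ for all $\tau$ is feasible and attains $E_S(f)=\sum_{\tau}|\tau|=|\M|$, meeting the lower bound and hence being a global minimizer. For the ``only if'' direction, a minimizer $f$ must realize the value $|\M|$, and attaining it forces the Cauchy--Schwarz equality condition, giving $|f(\tau)|=|\tau|$ for every $\tau$. The step I expect to require the most care is this last one: it is clean only once one knows the minimum value is exactly $|\M|$, i.e. that the bound is attained within the feasible class. The ``if'' direction supplies exactly this whenever an area-preserving configuration exists, so the genuine obstacle is to guarantee (or to take as part of the setup) the existence of such a configuration; absent that, a minimizer could in principle sit strictly above $|\M|$ and the forward implication would need separate justification.
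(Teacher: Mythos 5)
Your proposal is correct, and it reaches the paper's conclusion by a genuinely cleaner route for the key inequality. Both you and the paper first invoke Theorem~\ref{thm:2} to reduce the problem to minimizing $\sum_{\tau}|g(\tau)|^2/|\tau|$ subject to $\sum_{\tau}|g(\tau)|=\sum_{\tau}|\tau|$, but the paper then normalizes $|\M|=1$, eliminates one variable via the constraint, and computes critical points of the resulting function of $x_1,\ldots,x_{m-1}$, concluding that the ratios $x_k/y_k$ are all equal to a common constant $c$ which the constraint forces to be $1$. That argument identifies the unique critical point but never explicitly verifies it is the global minimum (this follows from the convexity of $x\mapsto x^2/y$, which the paper leaves implicit). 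Your Cauchy--Schwarz argument delivers the lower bound $E_S(g)\ge|\M|$ together with the exact equality characterization in one stroke, with no calculus and no normalization, so it is both more elementary and more complete on this point. The caveat you raise at the end is also well taken, and it applies equally to the paper's own proof: both arguments treat the image areas $|g(\tau)|$ as freely variable subject only to the sum constraint, whereas they are in fact coupled through the requirement that they arise from a single simplicial mapping (adjacent faces share edges, and the boundary may be pinned). The ``only if'' direction therefore silently assumes that an area-preserving configuration is achievable within the feasible class; if it were not, the constrained minimizer would sit strictly above $|\M|$ without being area-preserving. Making that existence hypothesis explicit would strengthen the statement, but given that the paper accepts the same reduction, your proof is a faithful and slightly sharper substitute.
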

\begin{proof}
Without loss of generality, we suppose that the total area of surface the image are normalized to be $1$, i.e., 
$$
\sum_{k=1}^m |\tau_k| = |\M|=1 ~\text{ and }~ \sum_{k=1}^m |f(\tau_k)| = |f(\M)|=1.
$$
Let $y_k:= |\tau_k| \in(0,1)$ and $x_k:= |f(\tau_k)| \in(0,1)$, $k=1,\ldots,m$, be the area of $k$th triangular face and its image, respectively. 
Since $\sum_{k=1}^m x_k = 1$ and $\sum_{k=1}^m y_k = 1$, we write
\begin{equation} \label{eq:xy}
x_m = 1-\sum_{k=1}^{m-1} x_k ~\text{ and }~ 
y_m = 1-\sum_{k=1}^{m-1} y_k,
\end{equation}
respectively. 
By \eqref{eq:xy} together with Theorem \ref{thm:2}, it suffices to show
$$
E_S(x_1, \ldots, x_{m-1}) = \sum_{k=1}^{m-1} \frac{x_k^2}{y_k} + \frac{(1-\sum_{k=1}^{m-1} x_k)^2}{1-\sum_{k=1}^{m-1} y_k}\geq 1,
$$
and the equality holds if and only if $x_k=y_k$, for $k=1, \ldots, m$. 
Noting that the critical points of $E_S(x_1, \ldots, x_{m-1})$ satisfies
$$
\frac{\partial E_S(x_1, \ldots, x_{m-1})}{\partial x_k} = \frac{2x_k}{y_k} - \frac{2(1-\sum_{k=1}^{m-1} x_k)}{1-\sum_{k=1}^{m-1} y_k} = 0,
$$
for $k=1, \ldots, m-1$, i.e.,
\begin{equation} \label{eq:thm3_1}
\frac{2x_k(1-\sum_{k=1}^{m-1} y_k) - 2 y_k (1-\sum_{k=1}^{m-1} x_k) }{y_k(1-\sum_{k=1}^{m-1} y_k)} = 0.
\end{equation}
Since $y_k>0$ and $1-\sum_{k=1}^{m-1} y_k>0$, \eqref{eq:thm3_1} implies
$$
x_k\left(1-\sum_{k=1}^{m-1} y_k\right) =  y_k \left(1-\sum_{k=1}^{m-1} x_k\right).
$$
Without loss of generality, let
$$
\frac{x_k}{y_k} = \frac{1-\sum_{k=1}^{m-1} x_k}{1-\sum_{k=1}^{m-1} y_k} = \frac{x_m}{y_m} := c,
$$
for $k=1, \ldots, m-1$, where $c$ is a constant, 
i.e., $x_k = cy_k$, for $k=1, \ldots, m$. 
Since $\sum_{k=1}^m y_k=1$ and $\sum_{k=1}^m x_k=1$, 
$$
c = c \sum_{k=1}^m y_k = \sum_{k=1}^m x_k = 1.
$$
Therefore, $|f(\tau_k)| = x_k = y_k = |\tau_k|$, $k=1, \ldots, m$. 
\end{proof}

Theorem \ref{thm:3} provides a rigorous proof that the stretch energy is the right functional to minimize when computing area-preserving mappings. 
Noting that the stretch energy is always larger or equal to the image area, and the equality holds if and only if the mapping is area-preserving. 
It is natural to further define the authalic energy as
\begin{equation} \label{eq:E_A}
E_A(f) = E_S(f) - \mathcal{A}(f),
\end{equation}
where $\mathcal{A}(f)$ measures the area of the image of $f$. 
Then, a quick corollary follows from Theorem \ref{thm:3}. 

\begin{corollary} \label{cor:3}
The authalic energy \eqref{eq:E_A} satisfies $E_A(f) \geq 0$ and the equality holds if and only if $f$ is area-preserving.
\end{corollary}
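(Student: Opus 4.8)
The plan is to reduce the statement to the geometric form of the stretch energy and then lean on the inequality already established for Theorem \ref{thm:3}. First I would record that $\mathcal{A}(f)=|f(\M)|=\sum_{\tau\in\F(\M)}|f(\tau)|$ and, using Theorem \ref{thm:2}, rewrite
\[
E_A(f)=E_S(f)-\mathcal{A}(f)=\sum_{\tau\in\F(\M)}\frac{|f(\tau)|^2}{|\tau|}-\sum_{\tau\in\F(\M)}|f(\tau)|.
\]
Setting $x_\tau:=|f(\tau)|$ and $y_\tau:=|\tau|$, this is precisely $\sum_\tau x_\tau^2/y_\tau-\sum_\tau x_\tau$, the same quantity that is controlled in the proof of Theorem \ref{thm:3}.

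For non-negativity I would invoke the Cauchy--Schwarz inequality in Engel (Titu) form,
\[
\sum_{\tau}\frac{x_\tau^2}{y_\tau}\ \geq\ \frac{\left(\sum_\tau x_\tau\right)^2}{\sum_\tau y_\tau}=\frac{|f(\M)|^2}{|\M|},
\]
so that $E_S(f)\geq |f(\M)|^2/|\M|$. Under the area normalization used throughout this section, namely $|f(\M)|=|\M|$ (the constraint of Theorem \ref{thm:3}), the right-hand side collapses to $|f(\M)|=\mathcal{A}(f)$, which yields $E_S(f)\geq\mathcal{A}(f)$ and hence $E_A(f)\geq 0$. Equivalently, this bound is just the assertion of Theorem \ref{thm:3} that the constrained minimum of $E_S$ equals the common total area, so I could also cite Theorem \ref{thm:3} directly rather than re-deriving the estimate.

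For the equality case I would argue both directions. If $f$ is area-preserving then $x_\tau=y_\tau$ for every $\tau$, so by Theorem \ref{thm:2} (or the corollary following it) $E_S(f)=\sum_\tau x_\tau=|f(\M)|=\mathcal{A}(f)$ and therefore $E_A(f)=0$. Conversely, equality in the Cauchy--Schwarz step forces the ratio $x_\tau/y_\tau$ to be independent of $\tau$, say $x_\tau=c\,y_\tau$; summing over $\tau$ and using $\sum_\tau x_\tau=\sum_\tau y_\tau$ pins down $c=1$, so $|f(\tau)|=|\tau|$ for all $\tau$ and $f$ is area-preserving.

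I expect the only genuine subtlety to be the role of the total-area constraint: the bare expression $\sum_\tau x_\tau^2/y_\tau-\sum_\tau x_\tau$ is not sign-definite for arbitrary positive $x_\tau,y_\tau$, and non-negativity hinges on the matched totals $\sum_\tau x_\tau=\sum_\tau y_\tau$. I would therefore state this normalization explicitly at the outset. The remaining move, upgrading the Cauchy--Schwarz equality condition from mere proportionality ($x_\tau=c\,y_\tau$) to genuine equality ($c=1$) via the matched totals, mirrors the final lines of the proof of Theorem \ref{thm:3} and is routine.
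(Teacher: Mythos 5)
Your proof is correct, but it takes a genuinely different route from the paper. The paper offers no standalone argument for Corollary \ref{cor:3}: it is declared a ``quick corollary'' of Theorem \ref{thm:3}, whose own proof proceeds by computing the critical points of $E_S(x_1,\ldots,x_{m-1})$ after eliminating $x_m$ and $y_m$ via the normalization, and concluding that the only critical point is $x_k=y_k$. You instead get the global bound in one stroke from the Engel (Titu) form of Cauchy--Schwarz, $\sum_\tau x_\tau^2/y_\tau \geq \left(\sum_\tau x_\tau\right)^2/\sum_\tau y_\tau$, with the equality case handled by the standard proportionality condition plus the matched totals. Your route buys two things the paper's does not: first, the paper's critical-point computation only locates a stationary point and never certifies that it is a global minimum (one would still need convexity of $(x,y)\mapsto x^2/y$ or a compactness/boundary argument), whereas Cauchy--Schwarz delivers the inequality and its equality case simultaneously; second, you make explicit that non-negativity of $E_A$ genuinely requires the total-area constraint $|f(\M)|=|\M|$ (indeed $|f(\M)|\geq|\M|$ suffices, and a single triangle with $|f(\tau)|=\tfrac12|\tau|$ shows $E_A<0$ without it), a hypothesis the corollary's statement leaves implicit and inherits silently from Theorem \ref{thm:3}. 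What the paper's route buys in exchange is economy: it reuses the already-proved Theorem \ref{thm:3} rather than introducing a new inequality.
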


Based on Corollary \ref{cor:3}, the authalic energy \eqref{eq:E_A} can be used to measure how far a mapping is from area-preserving.

\section{Stretch energy minimization for square-shaped authalic mappings}
\label{sec:SEMAlg}

In this section, we demonstrate the stretch energy minimization algorithm for the computation of square-shaped area-preserving mappings of simply connected open simplicial surfaces. 
First, we introduce the square-shaped boundary constraint in Subsection \ref{subsec:boundary}. 
Then, we compute a harmonic mapping as the initial mapping, which is introduced in Subsection \ref{subsec:initial}. 
Finally, we introduce the iterative procedure for the stretch energy minimization in Subsection \ref{subset:SEM}. 

\subsection{Square-shaped boundary constraints} \label{subsec:boundary}

We denote the index sets of boundary vertices sorted in counterclockwise order and the interior vertices as
$$
\B = \{b \mid v_b\in\partial\M \}
~\text{ and }~
\I = \{i\mid v_i\notin\partial\M \},
$$
respectively. 
Suppose the corner points $\v_{\C_1}, \ldots, \v_{\C_4}$ are selected and mapped to $(0,0)$, $(1,0)$, $(1,1)$, and $(0,1)$, respectively. 
Based on the selected corner indices $\C = \{\C_1, \C_2, \C_3, \C_4\}$, the boundary indices $\B$ is classified into $4$ categories
$$
\Y_0=\{\C_1, \ldots, \C_2\}, ~
\X_1=\{\C_2, \ldots, \C_3\}, ~
\Y_1=\{\C_3, \ldots, \C_4\}, ~ \text{ and } ~ 
\X_0=\{\C_4, \ldots, \C_1\}. 
$$
Each category is a subset of $\B$ sorted in counterclockwise order. 
The square-shaped constraint is then formulated as
$$
\f^1_{\X_0} = \0, ~ 
\f^1_{\X_1} = \1, ~
\f^2_{\Y_0} = \0, ~ \text{ and } ~
\f^2_{\Y_1} = \1.
$$
We denote $\I_s$ and $\B_s$ as the sets of indices of unknown entries and constrained entries of $\f^s$, respectively, for $s=1,2$. Then, 
\begin{equation} \label{eq:IB}
\I_1 = \I \cup \Y_0 \cup \Y_1, ~
\B_1 = \X_0 \cup \X_1, ~
\I_2 = \I \cup \X_0 \cup \X_1, ~ \text{ and } ~
\B_2 = \Y_0 \cup \Y_1. 
\end{equation}

\subsection{Initial mapping}
\label{subsec:initial}

The initial mapping is chosen to be a harmonic mapping $f:\mathcal{M}\to\mathbb{R}^2$ computed by minimizing the Dirichlet energy functional \cite{Hutc91,DeMA02} defined as
$$
E_D(f) = \frac{1}{2} \left( \mathbf{f}^{1\top} L_D \, \mathbf{f}^1 + \mathbf{f}^{2\top} L_D \, \mathbf{f}^2 \right),
$$
with $L_D$ being the Laplacian matrix
\begin{equation} \label{eq:L_D}
{[L_D]}_{i,j} =
   \begin{cases}
   \displaystyle
   -\frac{1}{2}\sum_{[v_i,v_j,v_k]\in\F(\M)} \cot\theta_{i,j}^k  &\mbox{if $[{\v}_i,{\v}_j]\in\mathcal{E}(\mathcal{M})$,}\\
   \displaystyle
   -\sum_{\ell\neq i} [L_D]_{i,\ell} &\mbox{if $j = i$,}\\
   0 &\mbox{otherwise}
   \end{cases}
\end{equation}
in which $\theta_{i,j}^k$ is the angle opposite to the edge $[{v}_i,{v}_j]$ at the vertex ${v}_k$ on $\mathcal{M}$, as illustrated in Figure \ref{fig:cotWeight}. 
A minimizer $f$ of $E_D$ is called a harmonic mapping that satisfies
$$
\nabla_{\f} E_D(f) :=
\begin{bmatrix}
\nabla_{\f^1} E_D(f)\\
\nabla_{\f^2} E_D(f)
\end{bmatrix}
=
\begin{bmatrix}
L_D \, \f^1\\
L_D \, \f^2  
\end{bmatrix}
= \0, 
$$
for $s=1,2$. 
Under the square-shaped boundary constraints, the equations for the minimization of $E_D$ are formulated as the linear systems
$$
[L_D]_{\I_s,\I_s} \f^s_{\I_s} = -[L_D]_{\I_s,\B_s} \f^s_{\B_s},
$$
for $s=1,2$, where $\I_s$ and $\B_s$ are given in \eqref{eq:IB}.

\subsection{Iterations for the stretch energy minimization} \label{subset:SEM}

With the gradient formula \eqref{eq:cor1} in Corollary \ref{cor:1}, the critical points of $E_S$ would satisfy
$$
\nabla_{\f^s} E_S(f) = 2 L_S(f) \, \f^s = \0,
$$
for $s=1,2$. 
Under the square-shaped boundary constraints, the minimization can be achieved by the fixed-point iterations
$$
[L_S(f^{(n)})]_{\I_s,\I_s} \f^{s(n+1)}_{\I_s} = -[L_S(f^{(n)})]_{\I_s,\B_s} \f^{s(n)}_{\B_s},
$$
for $s=1,2$, where $\I_s$ and $\B_s$ are given in \eqref{eq:IB}.

The computational procedure in detail of the stretch energy minimization for area-preserving mappings is summarized in Algorithm \ref{alg:SEM}. 

\begin{algorithm}[]
\caption{Stretch energy minimization for square-shaped area-preserving mappings}
\label{alg:SEM}
\begin{algorithmic}[1]
\Require A simply connected open surface $\M$, indices of corner vertices $\C_1, \ldots, \C_4$. 
\Ensure A square-shaped area-preserving mapping $\f$.
\State Let $\B = \{b \mid v_b\in\partial\M \}$ be the index set of boundary vertices sorted in counterclockwise order. 
\State Let $\I = \{i\mid v_i\notin\partial\M \}$ be the index set of interior vertices.  
\State Classify indices of boundary vertices $\B$ into 
\[
\Y_0=\{\C_1, \ldots, \C_2\}, \X_1=\{\C_2, \ldots, \C_3\}, \Y_1=\{\C_3, \ldots, \C_4\}, \text{ and } \X_0=\{\C_4, \ldots, \C_1\}. 
\]
\State Let $\f^1_{\X_0} = \0$, $\f^1_{\X_1} = \1$, $\f^2_{\Y_0} = \0$, and $\f^2_{\Y_1} = \1$. 
\State Let $\I_1 = \I \cup \Y_0 \cup \Y_1$,  $\B_1 = \X_0 \cup \X_1$, $\I_2 = \I \cup \X_0 \cup \X_1$, and $\B_2 = \Y_0 \cup \Y_1$. 
\State Let $L=L_D$ be the Laplacian matrix defined as \eqref{eq:L_D}. 
\State Solve the linear systems $L_{\I_1,\I_1} \f^1_{\I_1} = -L_{\I_1,\B_1} \f^1_{\B_1}$ and $L_{\I_2,\I_2} \f^2_{\I_2} = -L_{\I_2,\B_2} \f^2_{\B_2}$, respectively. \label{alg:SEM_7}
\While{not convergent}
\State Let $L=L_S(f)$ be the Laplacian matrix defined as \eqref{eq:L_S}. 
\State Solve the linear systems $L_{\I_1,\I_1} \f^1_{\I_1} = -L_{\I_1,\B_1} \f^1_{\B_1}$ and $L_{\I_2,\I_2} \f^2_{\I_2} = -L_{\I_2,\B_2} \f^2_{\B_2}$, respectively.
\EndWhile
\State \Return the mapping $\f = [\f^1, \f^2]$.
\end{algorithmic}
\end{algorithm}

\section{Numerical experiments}
\label{sec:Numerical}

Now, we demonstrate the numerical results of the square-shaped area-preserving mappings computed by the stretch energy minimization Algorithm \ref{alg:SEM}. All the experiments are performed in MATLAB on MacBook Pro M1 with 16 GB RAM. The benchmark triangular mesh models, shown in Figure \ref{fig:Model} are obtained from the AIM@SHAPE shape repository \cite{AIM}, the Stanford 3D scanning repository \cite{Stanford}, and Sketchfab \cite{Sketchfab}. 
The areas of the mesh models are normalized to be $1$ so that there would be no global scaling when the target domain is selected to be a unit square. Some of the mesh models are resampled or modified so that every triangular face contains at least one interior vertices.

The initial mappings, shown in Figure \ref{fig:harmonic}, of the stretch energy minimization Algorithm \ref{alg:SEM} is a square-shaped harmonic mapping introduced in Subsection \ref{subsec:initial}. 
The resulting area-preserving mappings of the benchmark mesh models computed by the stretch energy minimization Algorithm \ref{alg:SEM} are demonstrated in Figure \ref{fig:SEM}. We see that the area-preserving mappings look very different from the initial harmonic mappings.

\begin{figure}
\centering
\begin{tabular}{cccc}
Chinese Lion & Femur & Max Planck & Left Hand\\
\includegraphics[height=4cm]{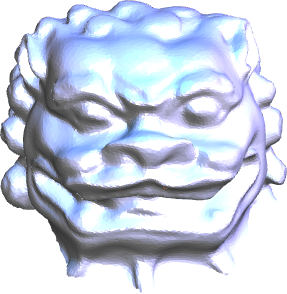} &
\includegraphics[height=4cm]{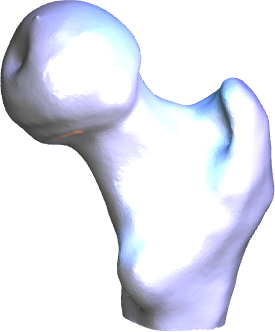} &
\includegraphics[height=4cm]{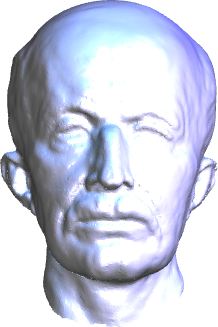} & 
\includegraphics[height=4cm]{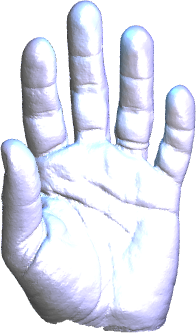}
\\[0.25cm]
Knit Cap Man & Bimba Statue & Buddha & Nefertiti Statue\\
\includegraphics[height=4cm]{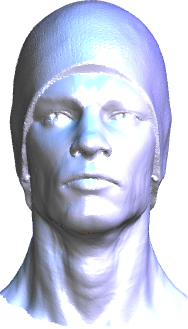} &
\includegraphics[height=4cm]{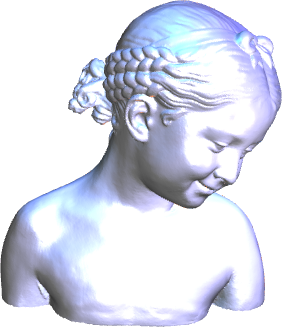} &
\includegraphics[height=4cm]{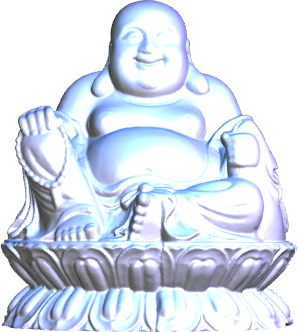} &
\includegraphics[height=4cm]{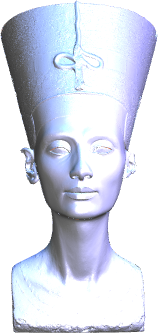} 
\end{tabular}
\caption{The benchmark triangular mesh models.}
\label{fig:Model}
\end{figure}

\begin{figure}
\centering
\begin{tabular}{cccc}
Chinese Lion & Femur & Max Planck & Left Hand\\
\includegraphics[height=3.5cm]{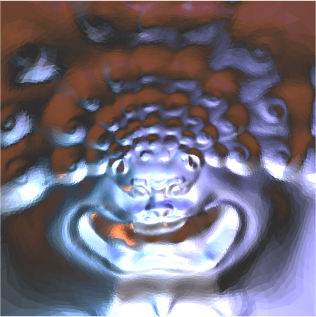} &
\includegraphics[height=3.5cm]{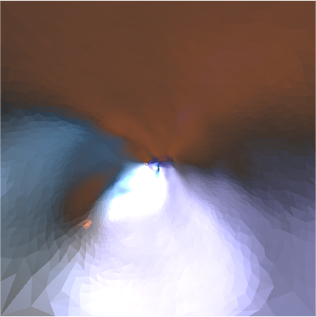} &
\includegraphics[height=3.5cm]{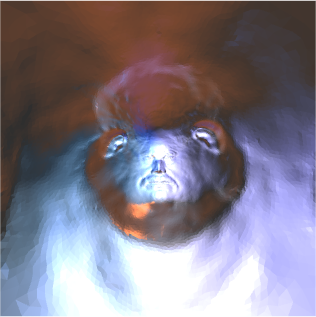} & 
\includegraphics[height=3.5cm]{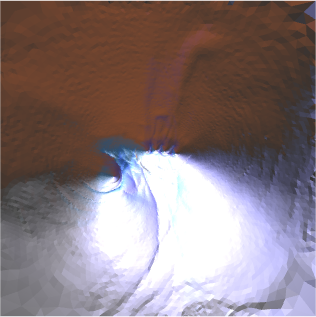}
\\[0.25cm]
Knit Cap Man & Bimba Statue & Buddha & Nefertiti Statue\\
\includegraphics[height=3.5cm]{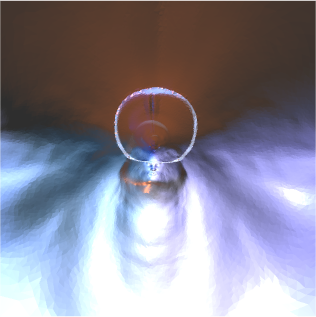} &
\includegraphics[height=3.5cm]{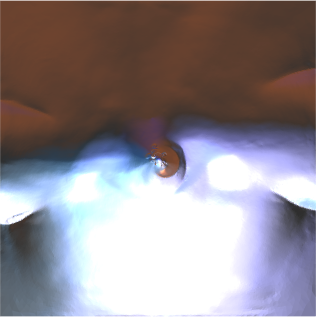} &
\includegraphics[height=3.5cm]{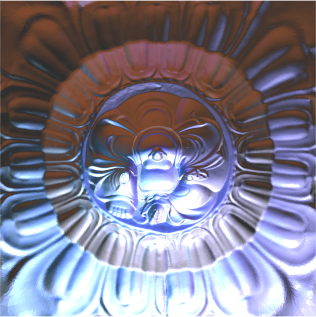} &
\includegraphics[height=3.5cm]{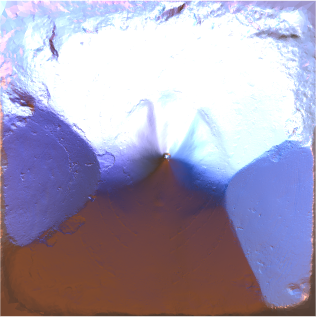} \end{tabular}
\caption{The square-shaped initial harmonic mappings of benchmark triangular mesh models computed by Step \ref{alg:SEM_7} of Algorithm \ref{alg:SEM}.}
\label{fig:harmonic}
\end{figure}

\begin{figure}
\centering
\begin{tabular}{cccc}
Chinese Lion & Femur & Max Planck & Left Hand\\
\includegraphics[height=3.5cm]{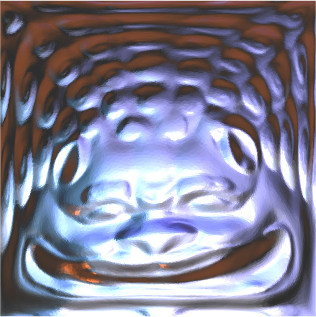} &
\includegraphics[height=3.5cm]{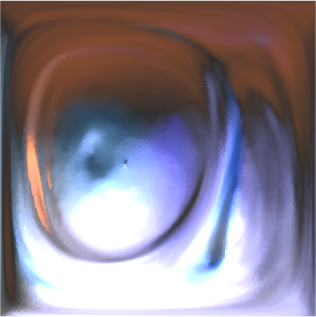} &
\includegraphics[height=3.5cm]{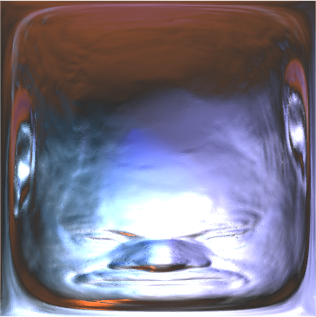} & 
\includegraphics[height=3.5cm]{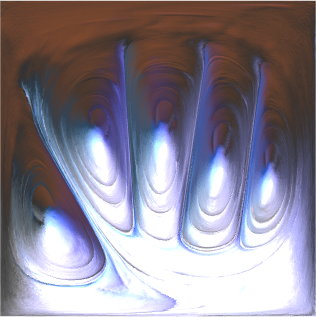}
\\[0.25cm]
Knit Cap Man & Bimba Statue & Buddha & Nefertiti Statue\\
\includegraphics[height=3.5cm]{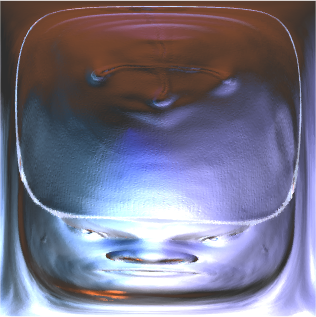} &
\includegraphics[height=3.5cm]{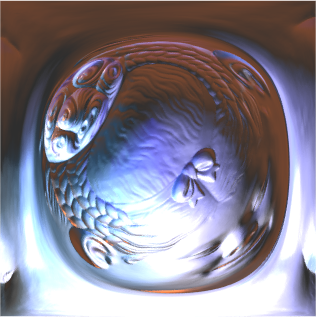} &
\includegraphics[height=3.5cm]{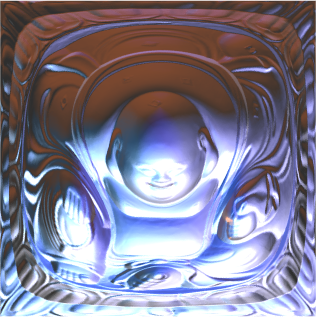} &
\includegraphics[height=3.5cm]{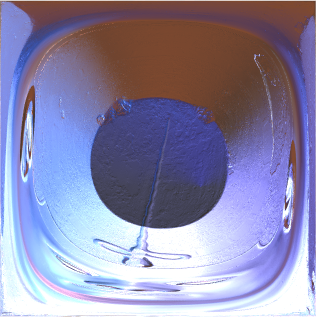} \end{tabular}
\caption{The square-shaped area-preserving mappings of benchmark triangular mesh models computed by the stretch energy minimization Algorithm \ref{alg:SEM}.}
\label{fig:SEM}
\end{figure}

Let $f:\M\to\R^2$ be the resulting area-preserving mapping computed by Algorithm \ref{alg:SEM}. 
Noting that the image $|f(\M)|$ is a unit square with the area being $1$. As stated in Corollary \ref{cor:3}, the authalic energy $E_A$ defined in \eqref{eq:E_A} satisfies
$$
E_A(f) = E_S(f) - 1,
$$
and $E_A(f)=0$ if and only if $f$ is area-preserving. 
The decrease of the authalic energy $E_A$ is equivalent to the decrease of the stretch energy $E_S$. 
To verify the performance of the stretch energy minimization Algorithm \ref{alg:SEM} on decreasing the stretch energy $E_S$ defined in \eqref{eq:E_S}, in Figure \ref{fig:Ea}, we demonstrate the relationship between the authalic energy $E_A$ and the number of iterations of Algorithm \ref{alg:SEM}. 
We observe that the authalic energy is decreased drastically to nearly zero at the first three iteration steps, which indicates that Algorithm \ref{alg:SEM} performs effectively on decreasing the authalic energy.

\begin{figure}
\centering
\begin{tabular}{cccc}
Chinese Lion & Femur & Max Planck & Left Hand\\
\includegraphics[height=4.3cm]{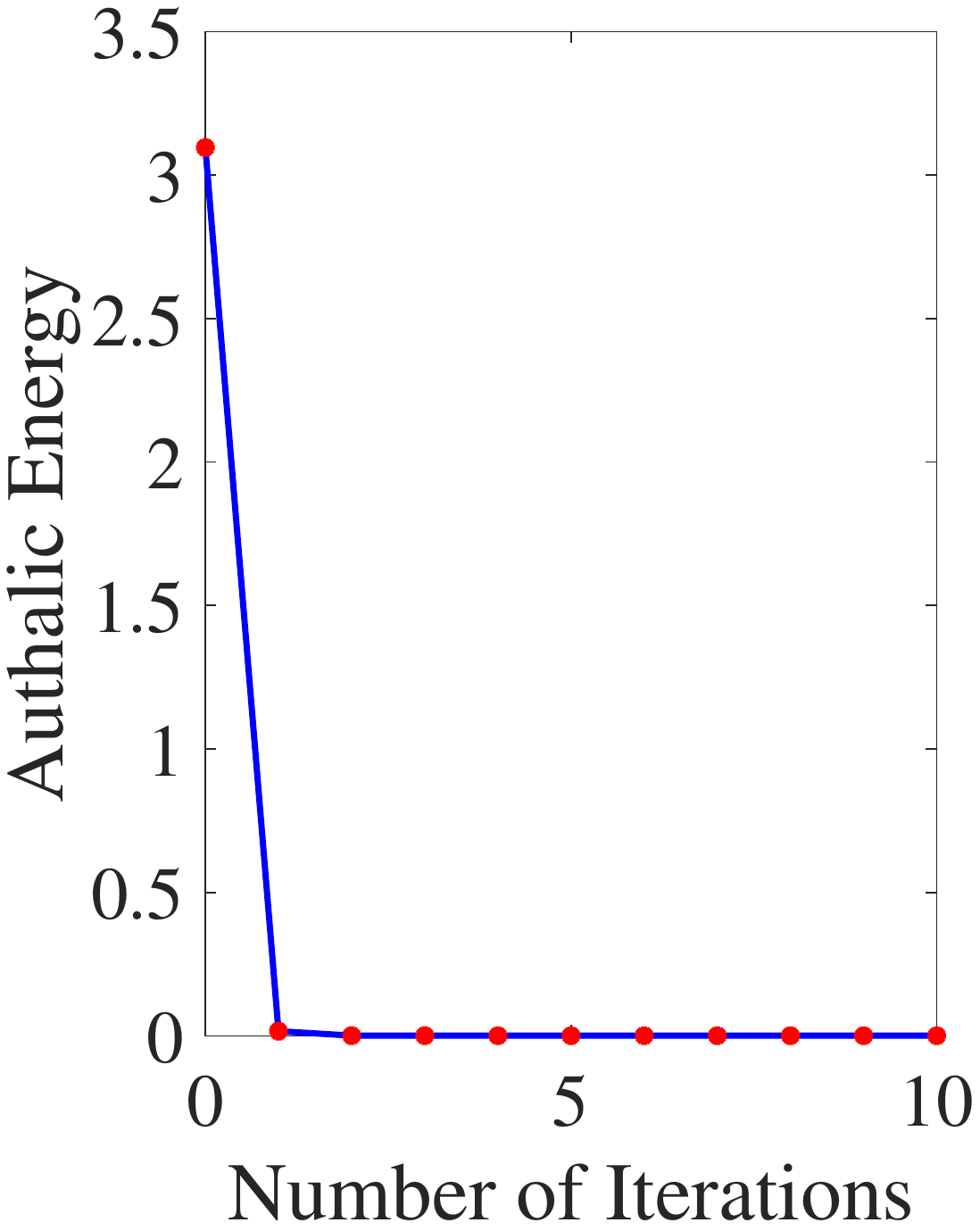} & 
\includegraphics[height=4.3cm]{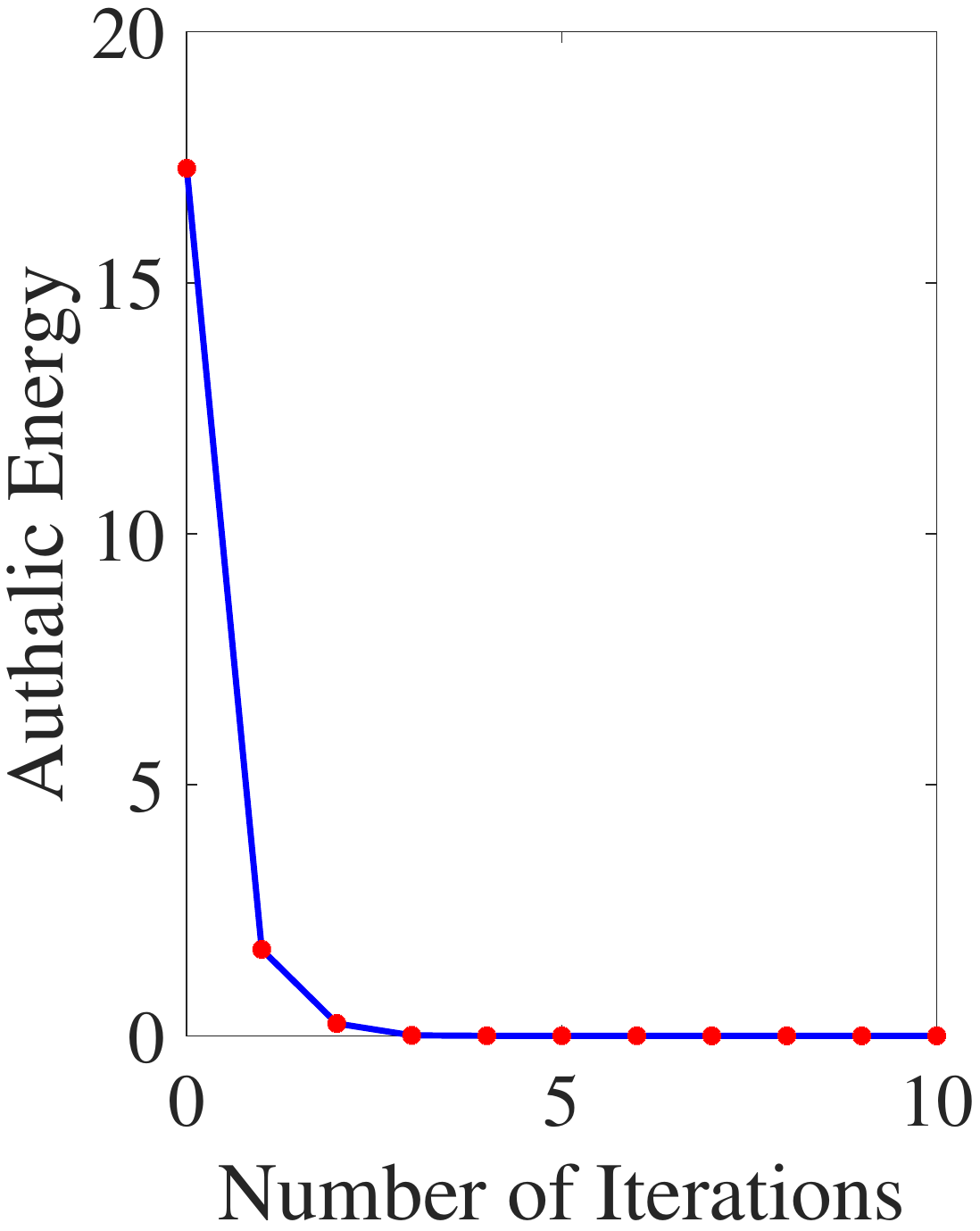} & 
\includegraphics[height=4.3cm]{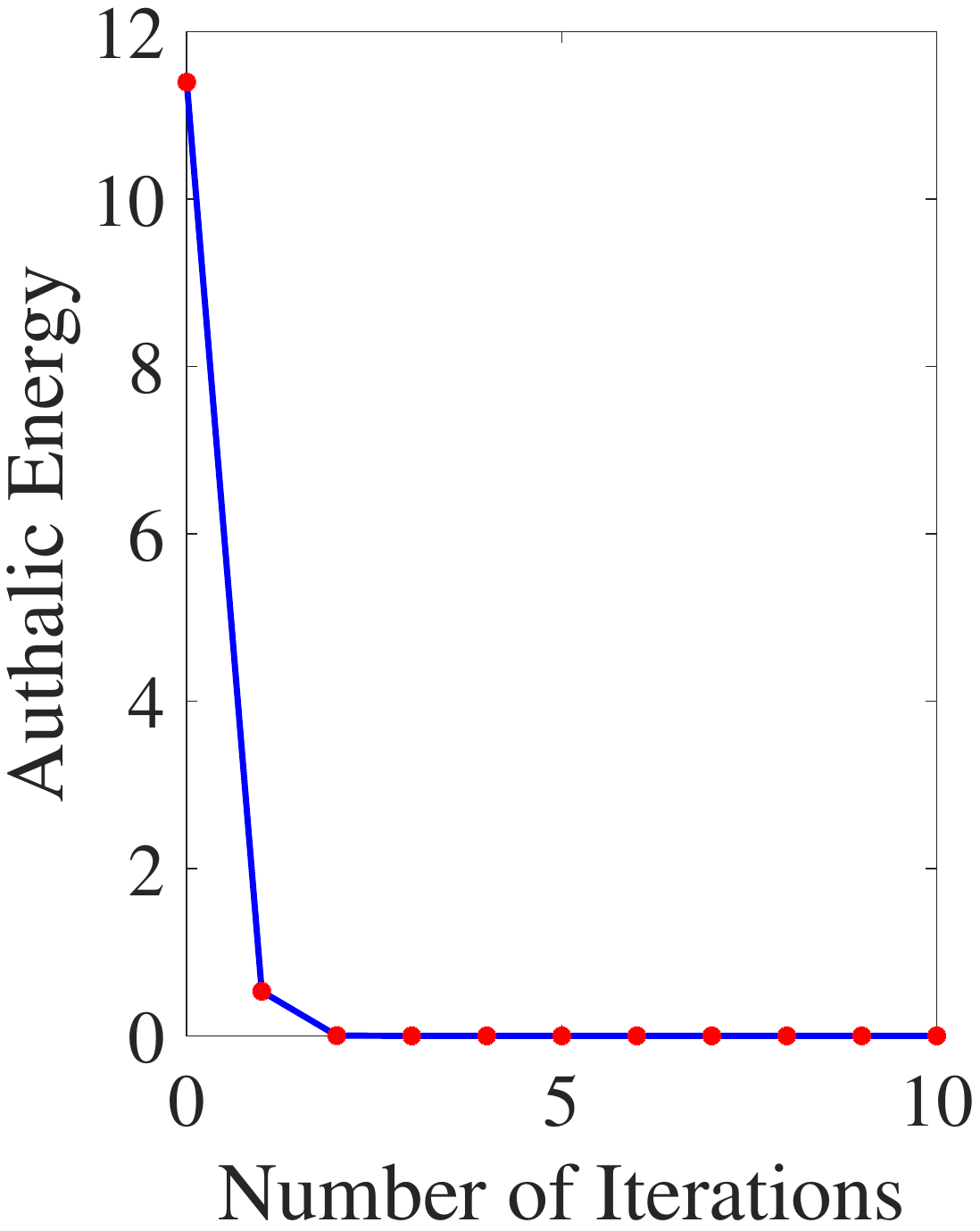} &
\includegraphics[height=4.3cm]{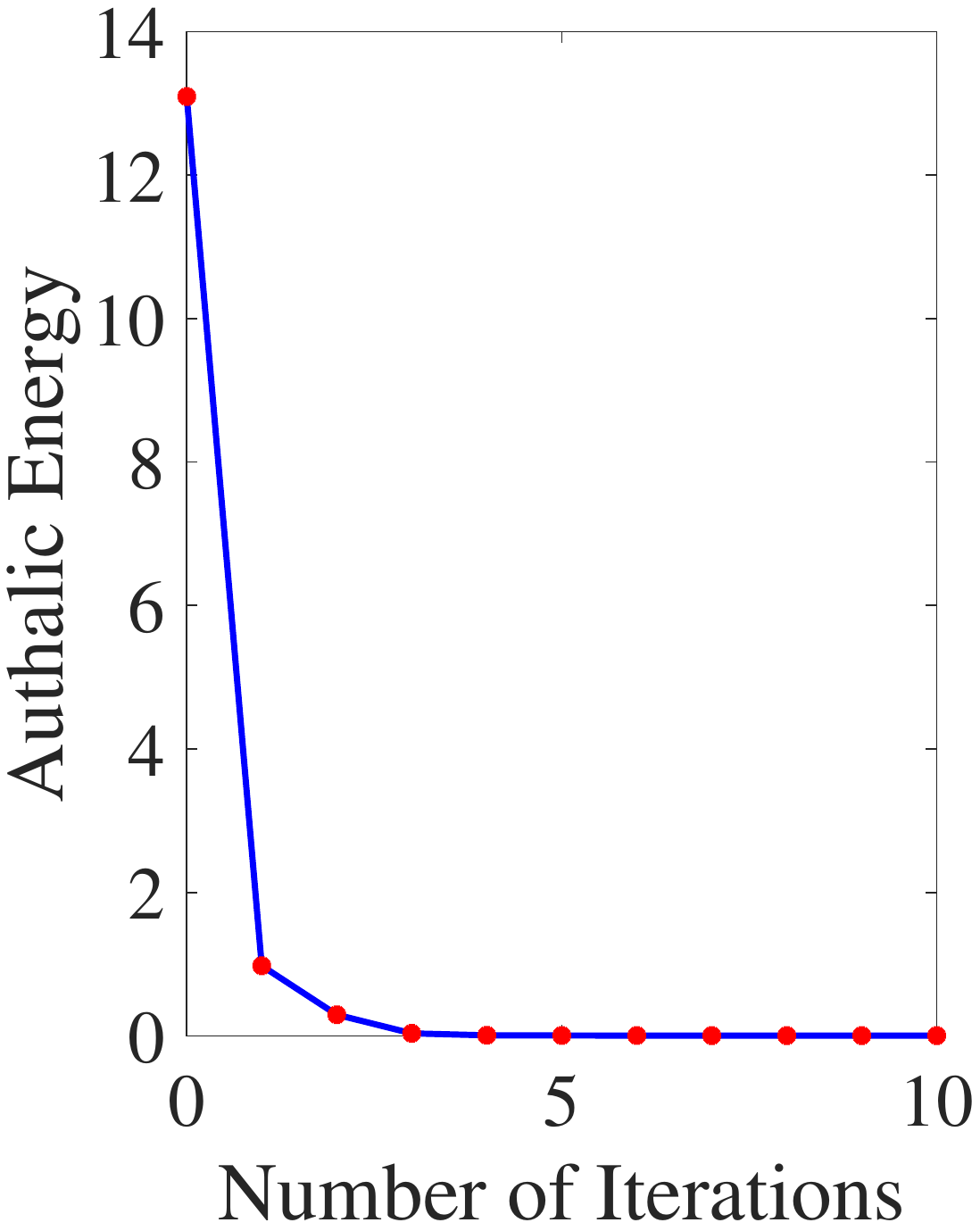} 
\\[0.5cm]
Knit Cap Man  & Bimba Statue & Buddha & Nefertiti Statue \\
\includegraphics[height=4.3cm]{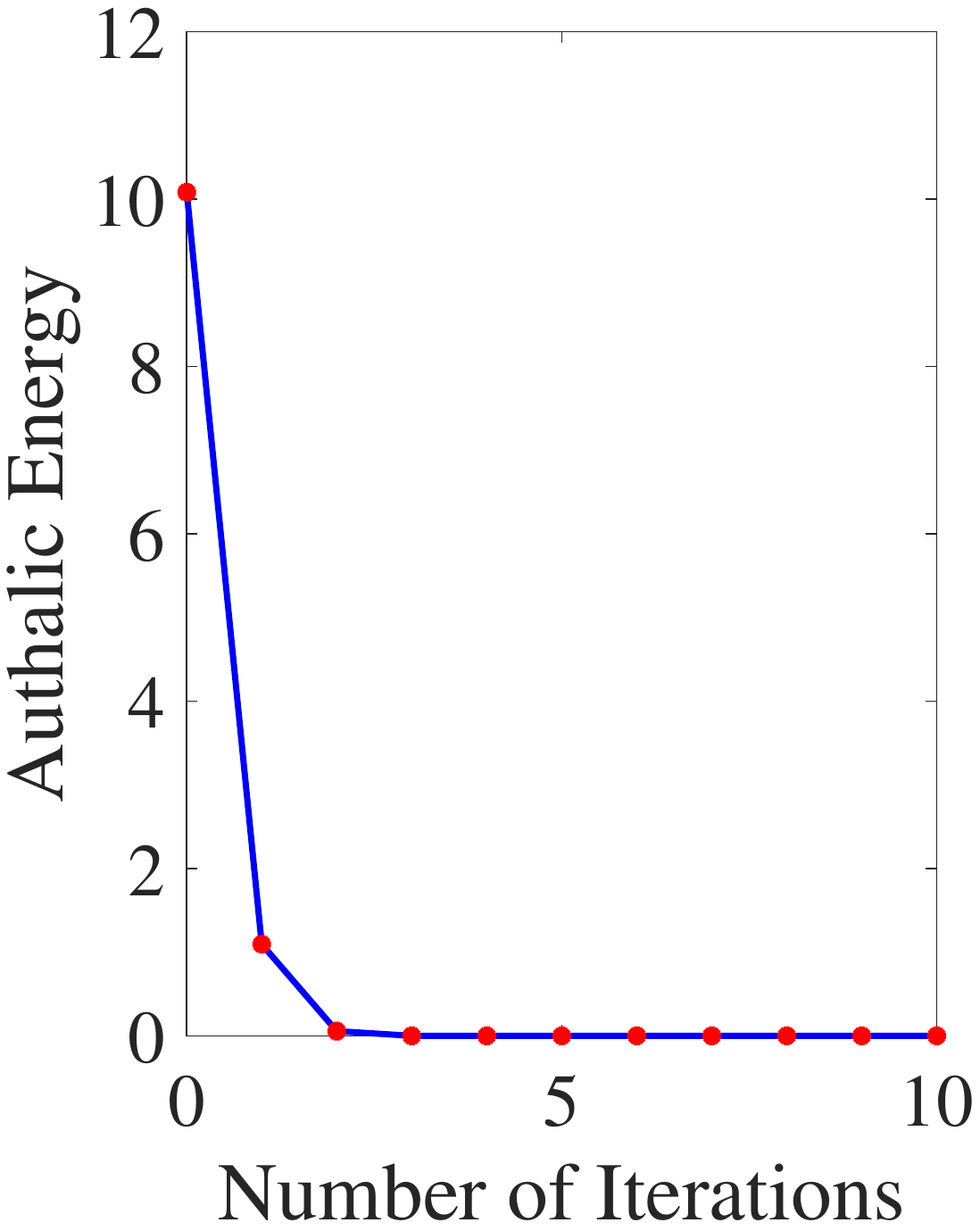} &
\includegraphics[height=4.3cm]{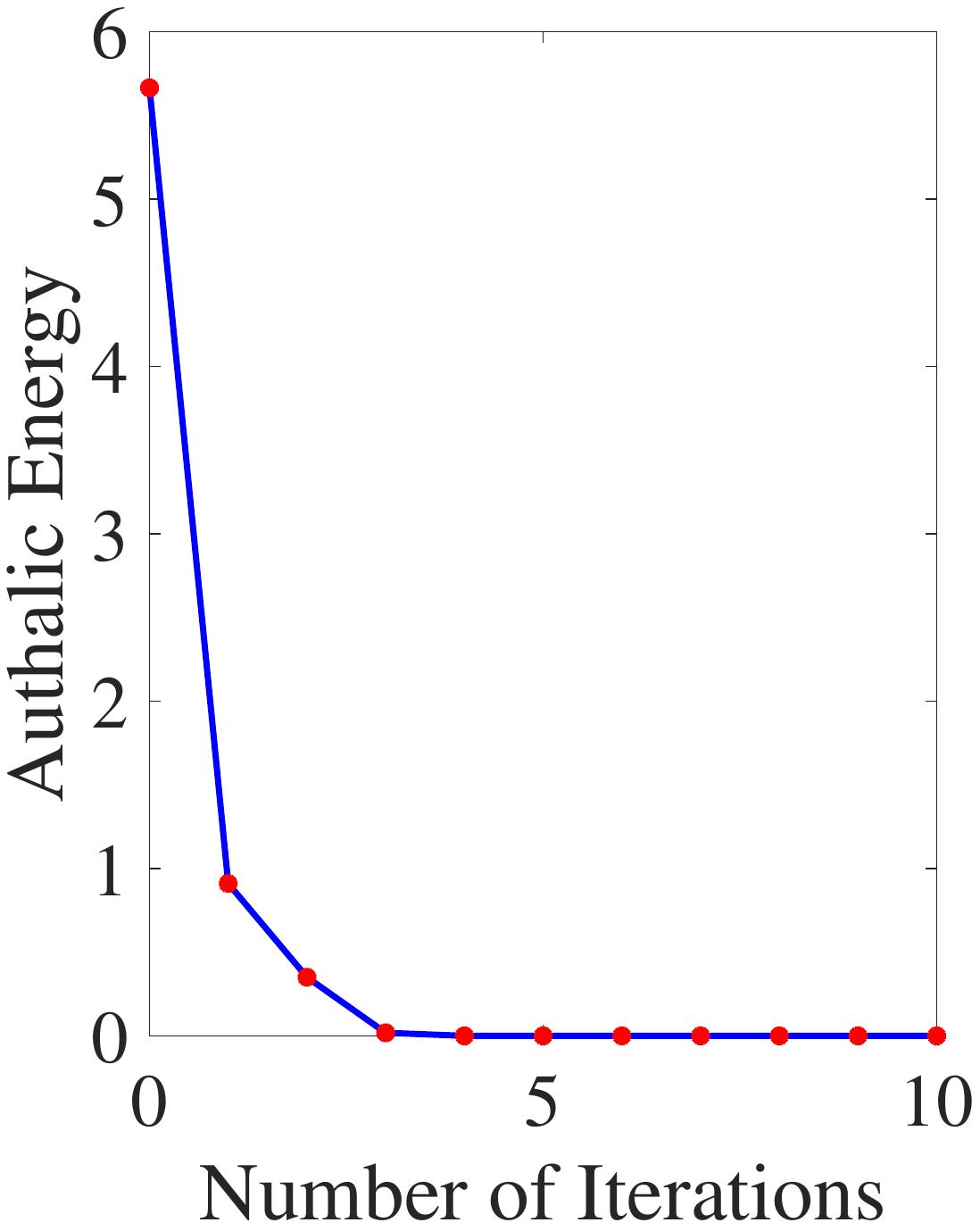} &
\includegraphics[height=4.3cm]{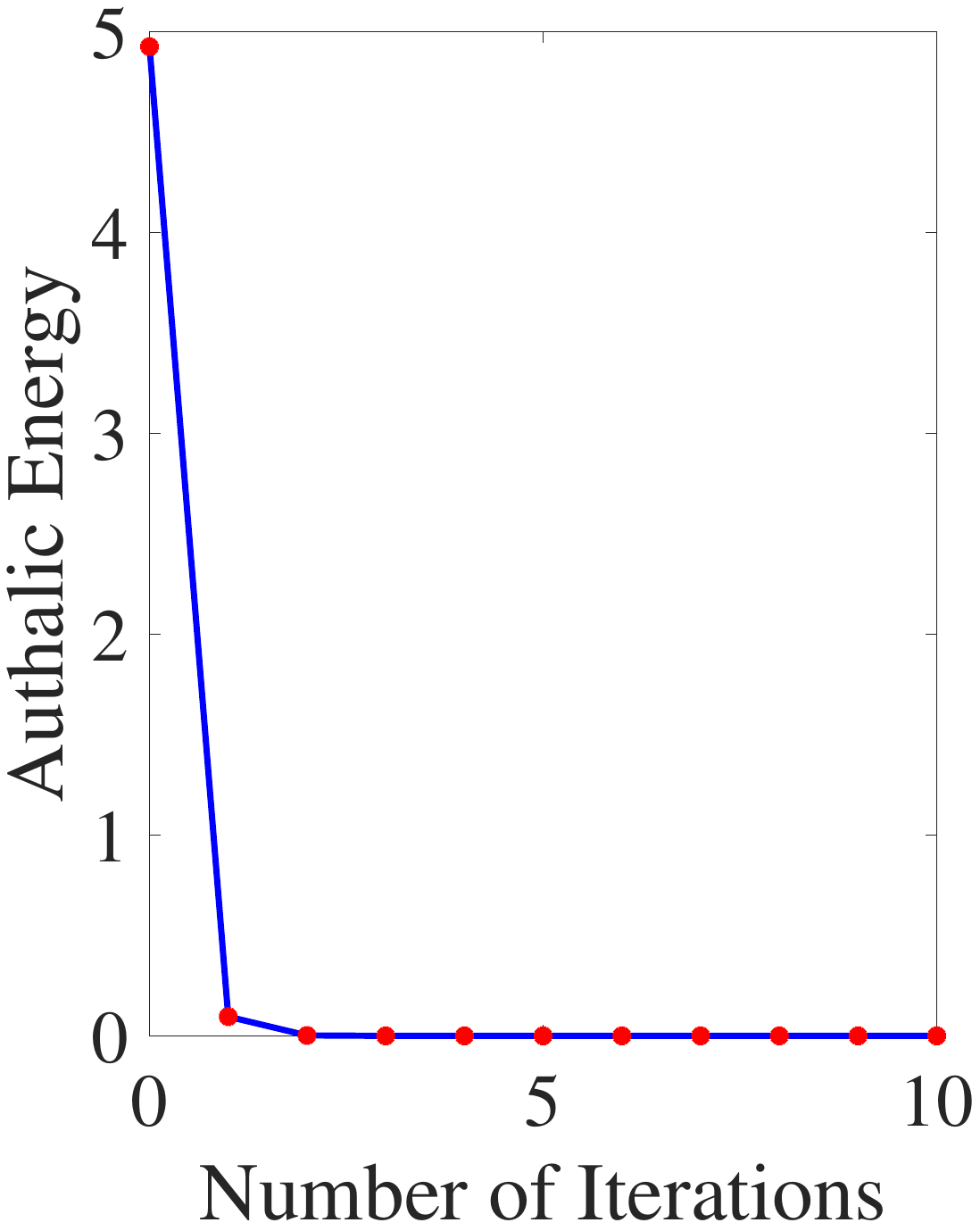} & 
\includegraphics[height=4.3cm]{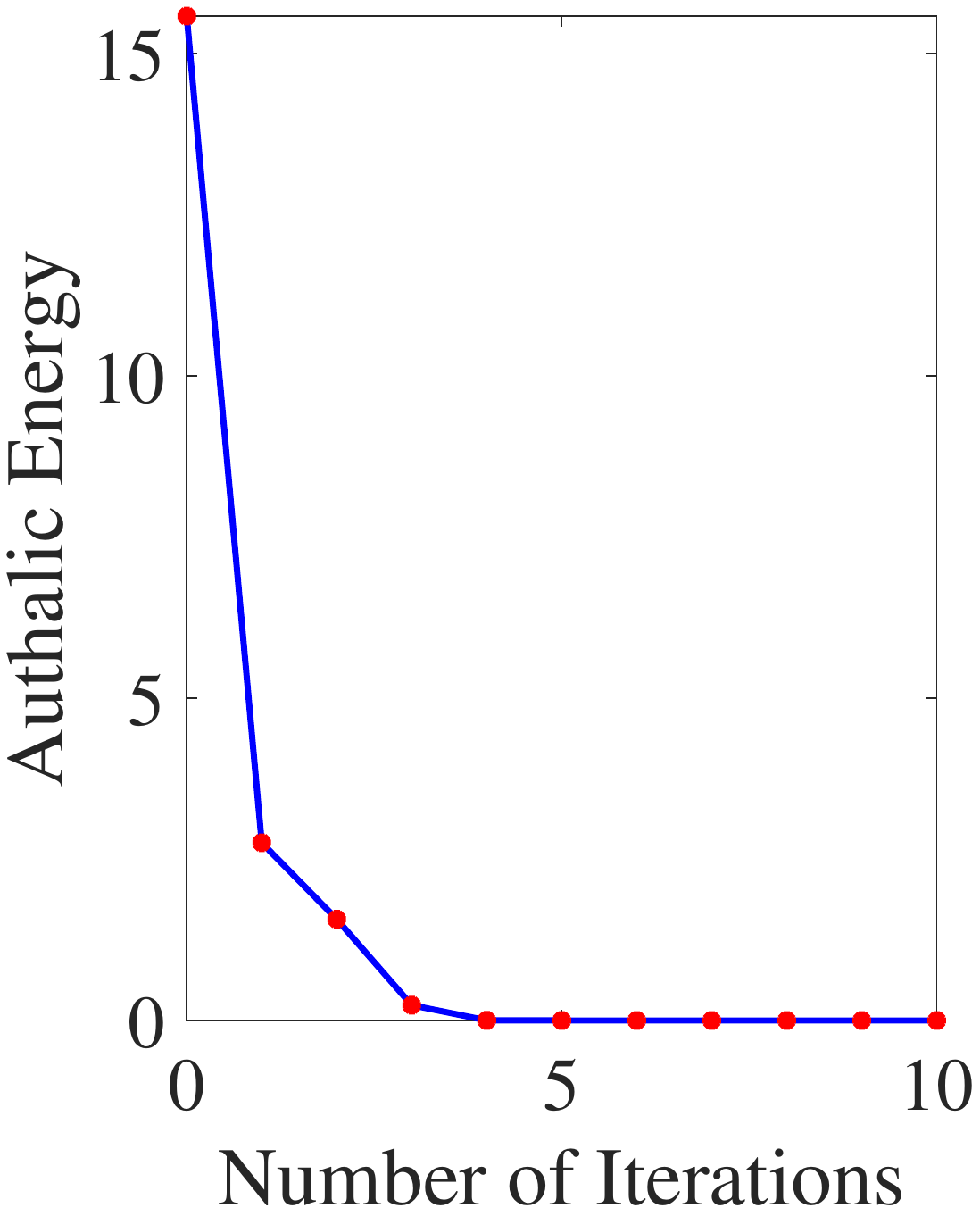}
\end{tabular}
\caption{The relationship between the number of iterations of Algorithm \ref{alg:SEM} and the authalic energy $E_A$ defined in \eqref{eq:E_A}.}
\label{fig:Ea}
\end{figure}

To show the area-preserving property of resulting mappings computed by the stretch energy minimization Algorithm \ref{alg:SEM}, in Figure \ref{fig:hist}, we demonstrate histograms of area ratios $R_A(f,\tau)$ defined in \eqref{eq:R_A}. We observe that the area ratios of most triangular faces are close to $1$, especially for the mesh models with larger numbers of vertices. These results indicate that the mappings computed by Algorithm \ref{alg:SEM} preserve the area well.

To quantify the area-preserving property of the resulting mappings, in Table \ref{tab:SEM}, we demonstrate the mean and the standard deviation (SD) of the area ratios
\begin{equation} \label{eq:R_A}
R_A(f,\tau) = \frac{|f(\tau)|}{|\tau|},
\end{equation}
of the resulting area-preserving mapping $f$ computed by Algorithm \ref{alg:SEM}, where the mean and the SD are taken over all triangular faces $\tau\in\F(\M)$. 
An area-preserving mapping $f$ has the property that 
$$
\underset{\tau\in\F(\M)}{\mathrm{mean}}R_A(f,\tau) = 1 
~\text{ and }~
\underset{\tau\in\F(\M)}{\mathrm{SD}}R_A(f,\tau) = 0. 
$$
From Table \ref{tab:SEM}, we observe that the mean and the SD of area ratios are reasonably close to $1$ and $0$, respectively, which indicates that the resulting mappings preserve the area well. 
In addition, the authalic energies of the resulting mappings are demonstrated in Table \ref{tab:SEM} as well. We see that the authalic energies of mappings are close to zero, from Corollary \ref{cor:3}, these mappings are reasonably close to area-preserving. 
Furthermore, the computational time cost of each mapping is also demonstrated in Table \ref{tab:SEM}. 
It would cost less than 80 seconds on a personal laptop to compute an area-preserving mapping of a mesh model of roughly 1 million vertices by Algorithm \ref{alg:SEM}, which is quite efficient.

\begin{figure}
\centering
\begin{tabular}{cccc}
Chinese Lion & Femur & Max Planck & Left Hand\\
\includegraphics[height=4.3cm]{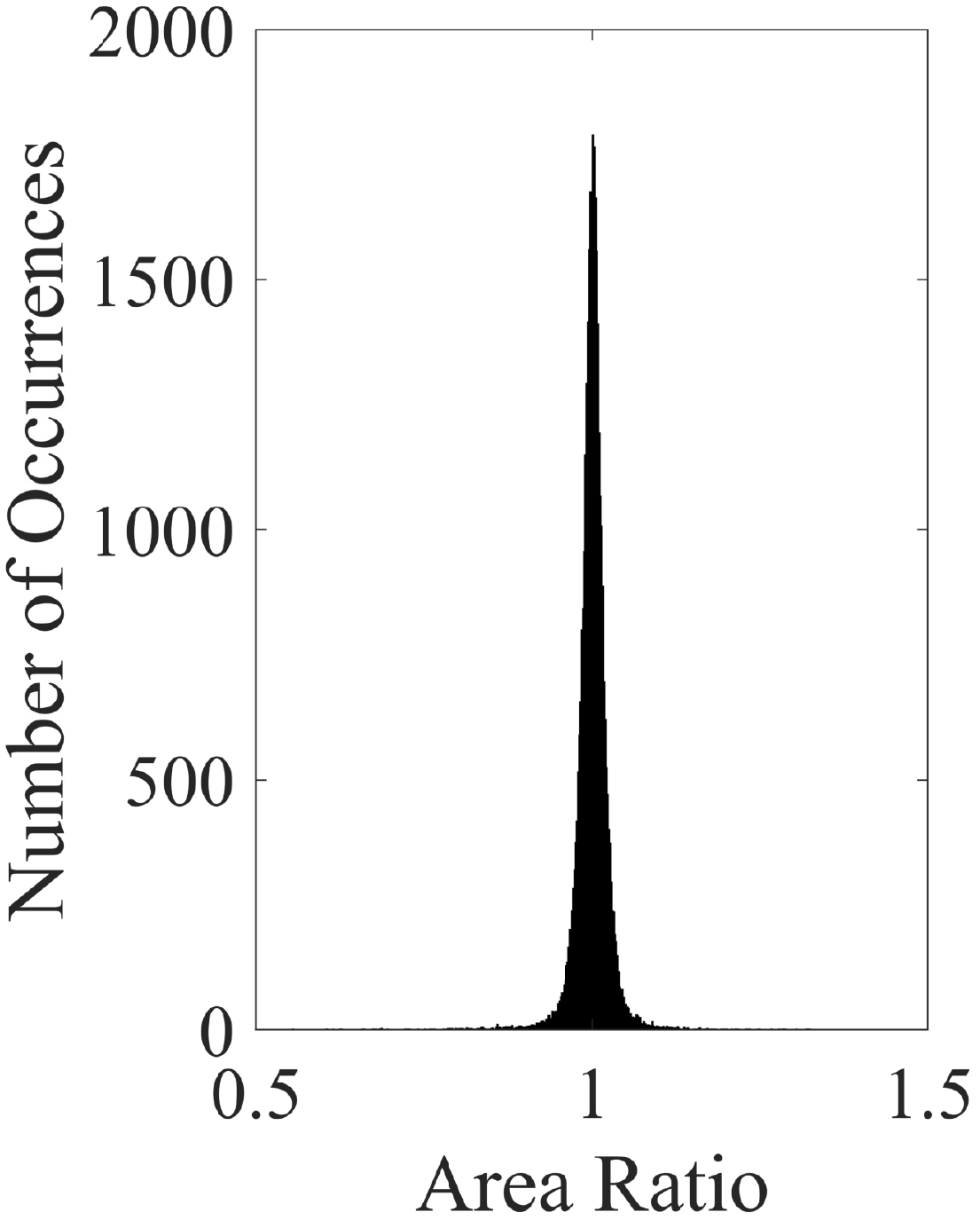} & 
\includegraphics[height=4.3cm]{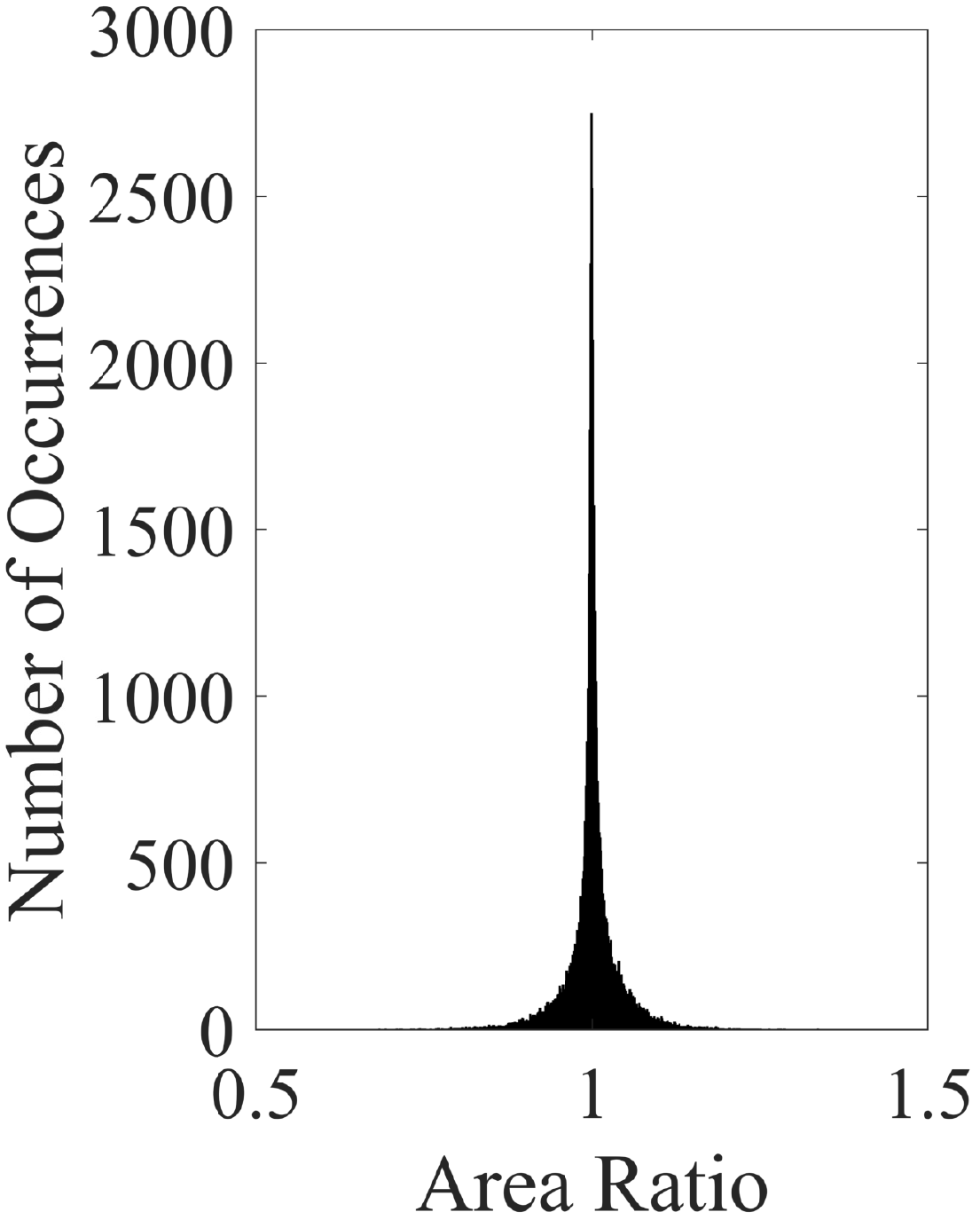} & 
\includegraphics[height=4.3cm]{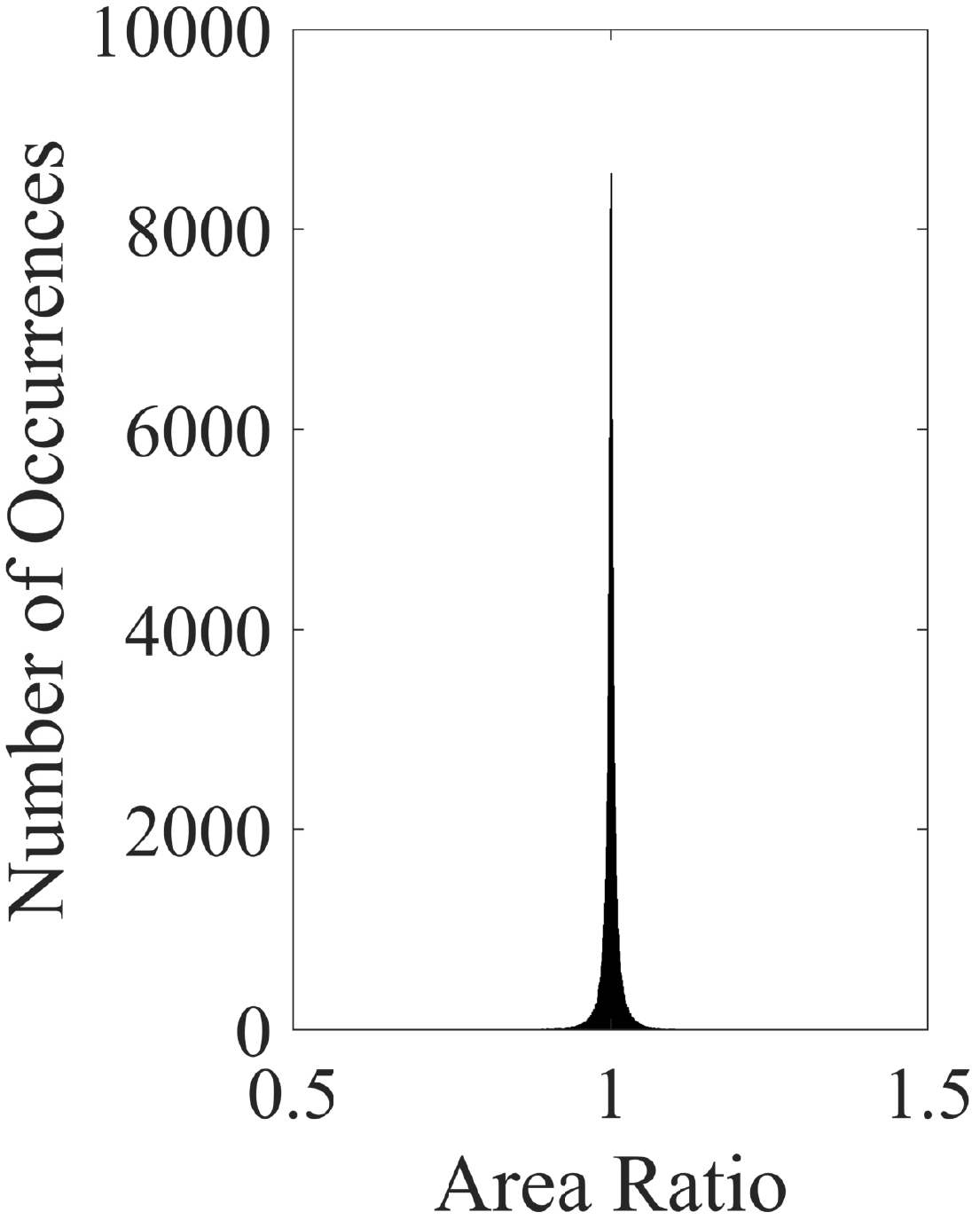} &
\includegraphics[height=4.3cm]{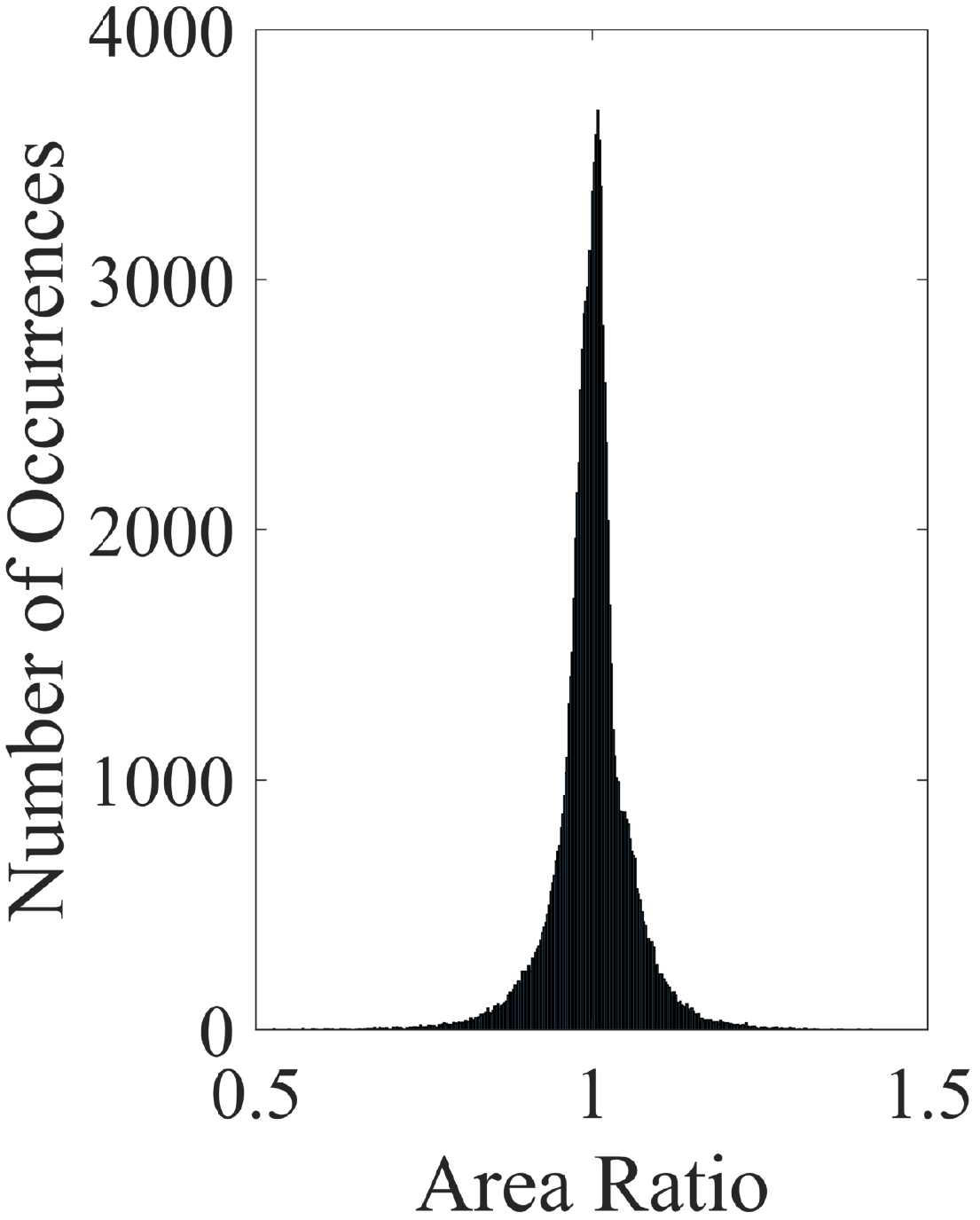} 
\\[0.5cm]
Knit Cap Man  & Bimba Statue & Buddha & Nefertiti Statue \\
\includegraphics[height=4.3cm]{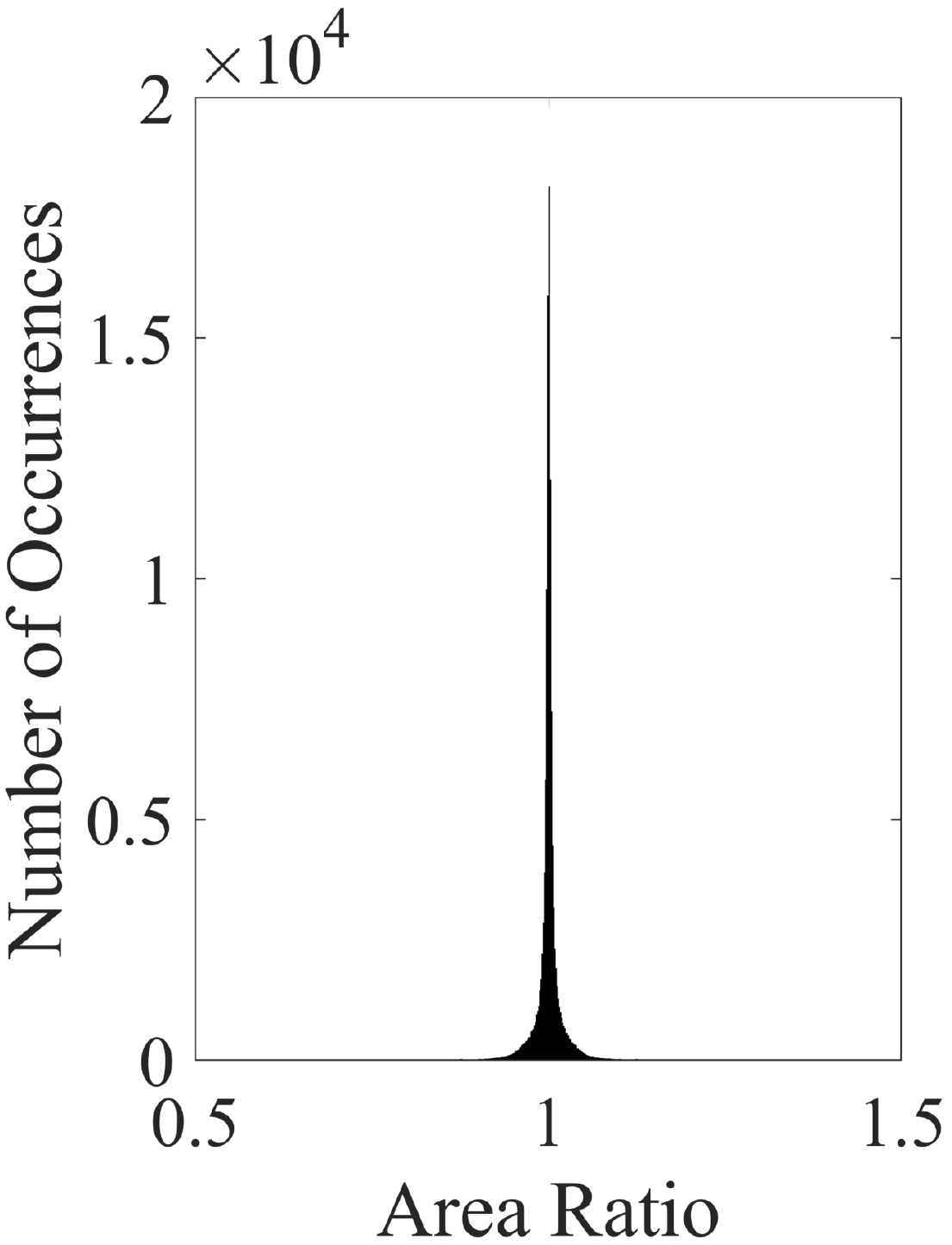} &
\includegraphics[height=4.3cm]{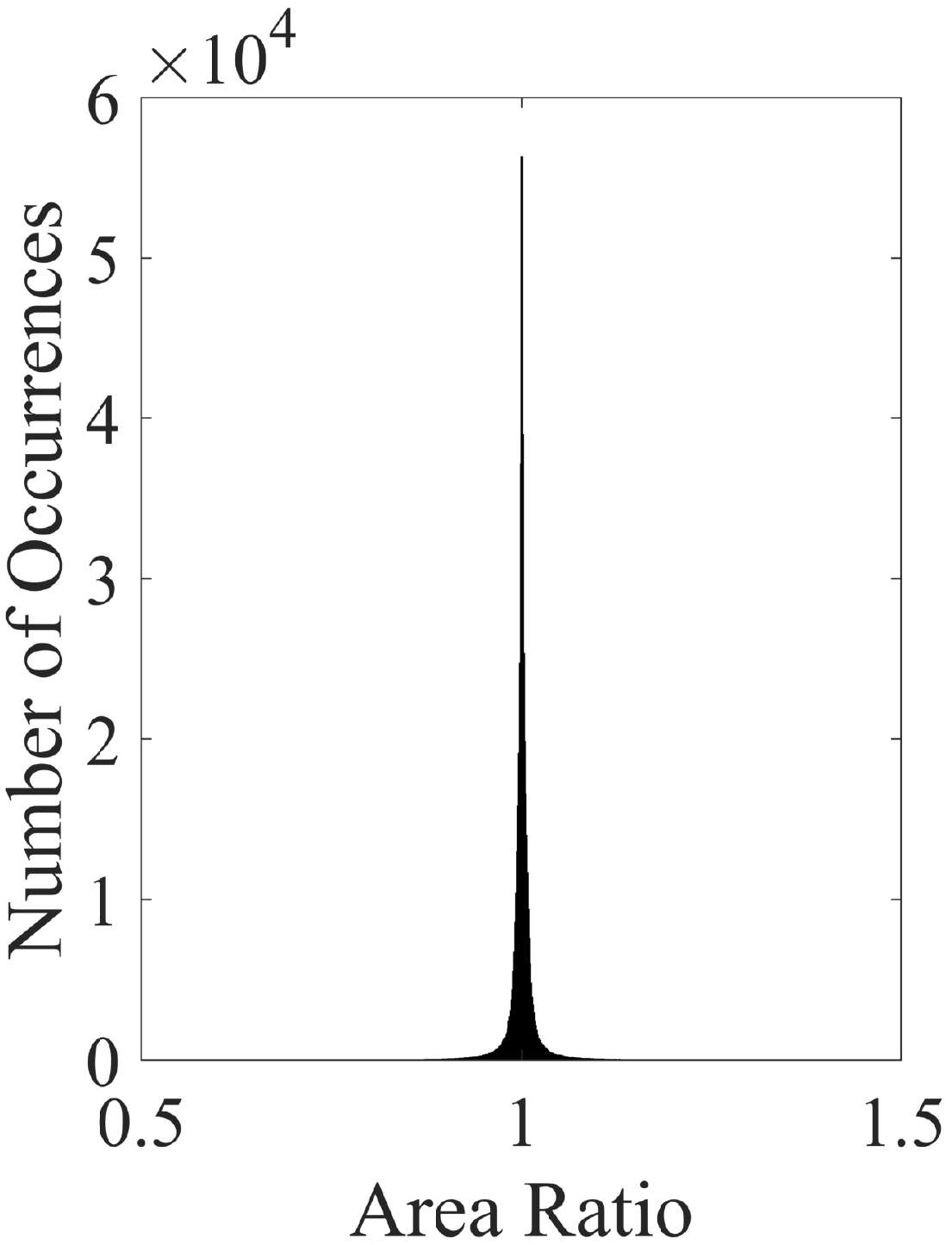} &
\includegraphics[height=4.3cm]{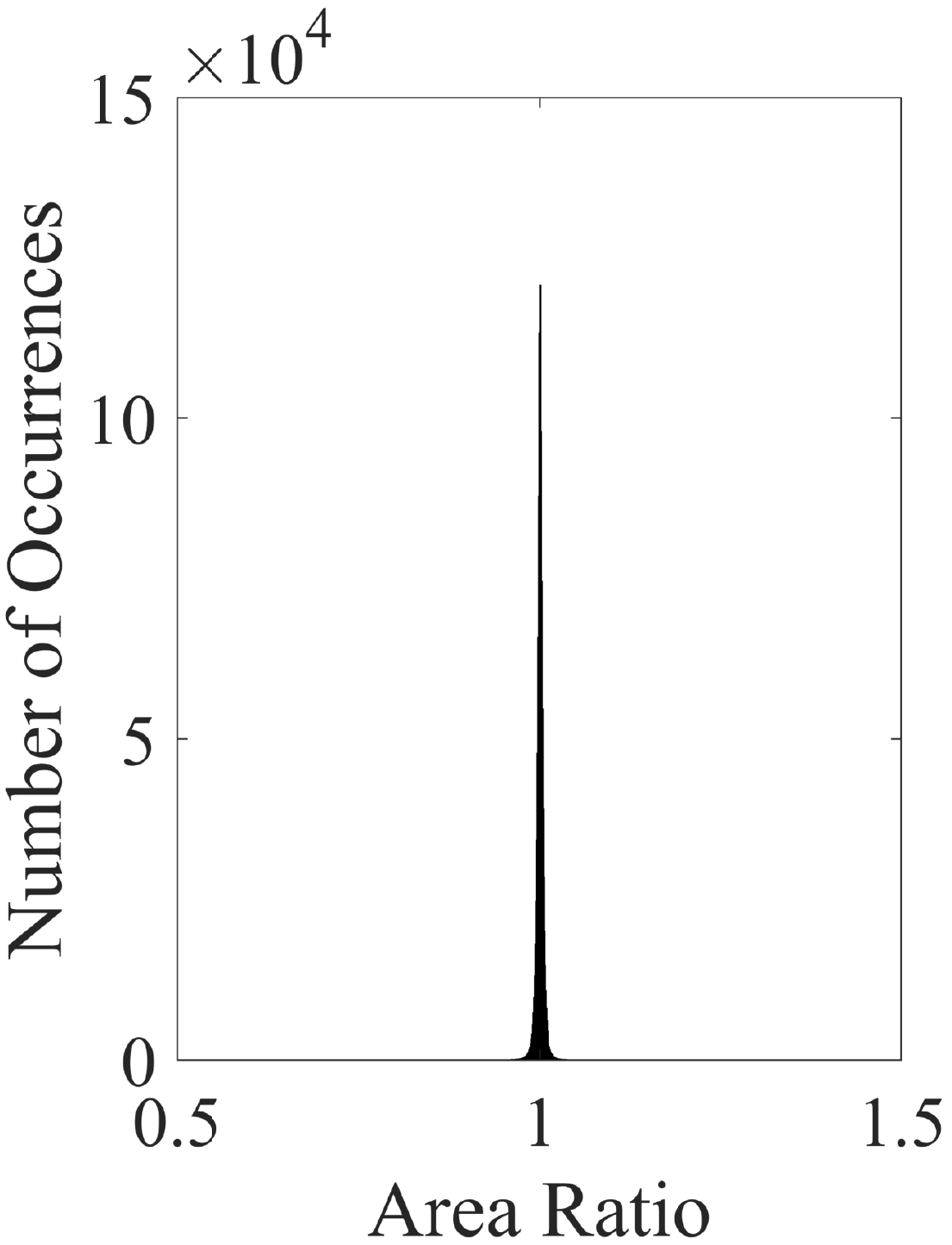} & 
\includegraphics[height=4.3cm]{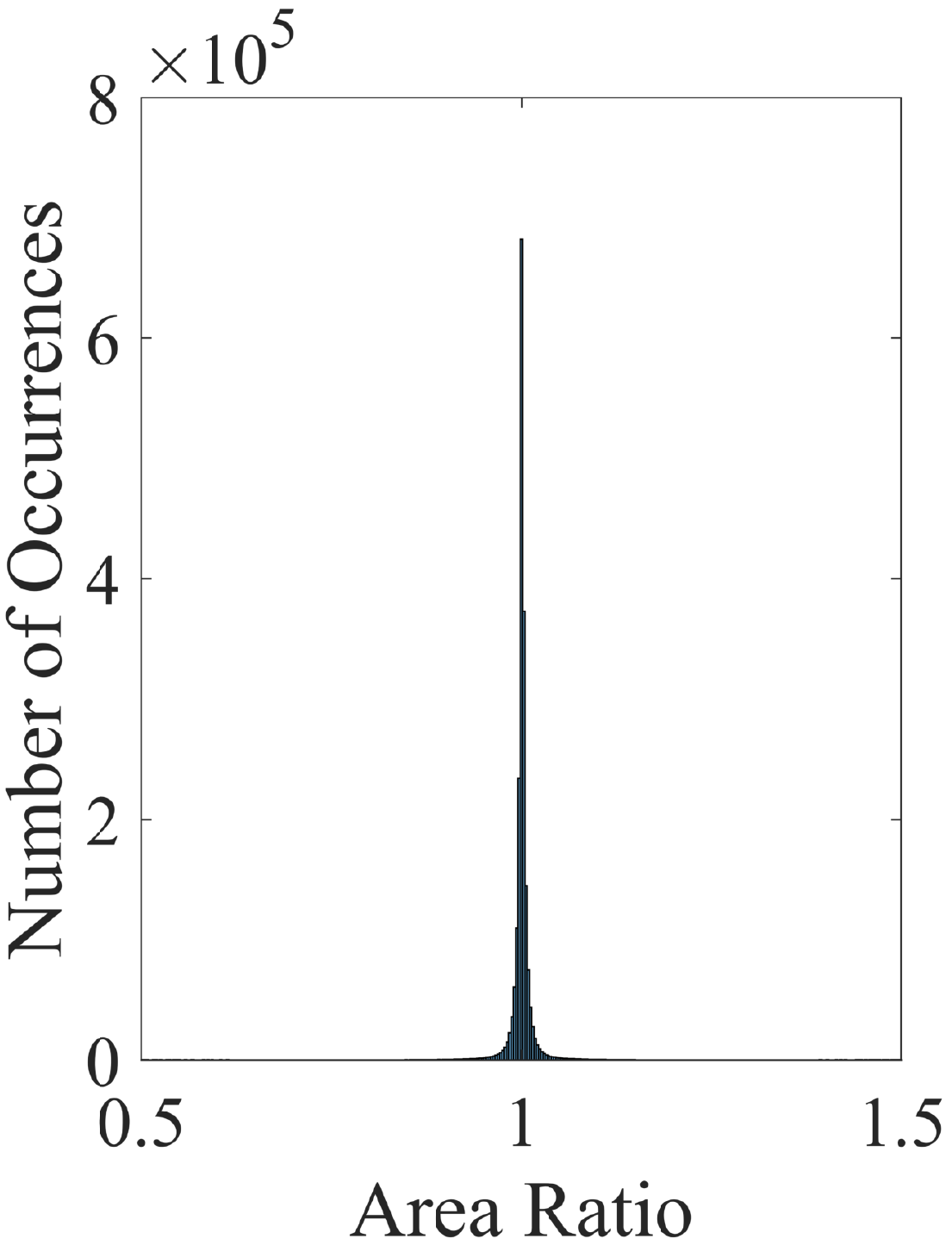}
\end{tabular}
\caption{Histograms of area ratios $R_A(f,\tau)$ of area-preserving mappings of benchmark mesh models computed by the stretch energy minimization Algorithm \ref{alg:SEM}.}
\label{fig:hist}
\end{figure}

\begin{table}[]
\centering
\begin{tabular}{lrrrrrr}
\hline 
\multirow{2}{*}{Model Name} & \multirow{2}{*}{\# Faces} & \multirow{2}{*}{\# Vertices} & \multicolumn{2}{c}{Area Ratio} & \multirow{2}{*}{Authalic Energy} & Time \\ 
&&& Mean & SD & & Cost
\\
\hline 
Chinese Lion     &   34,421 &  17,334 & 0.9999 & 0.0252 & $6.0610\times 10^{-4}$ & 0.78 \\ 
Femur            &   43,301 &  21,699 & 1.0000 & 0.0381 & $1.4043\times 10^{-3}$ & 1.11 \\ 
Max Planck       &   82,977 &  41,588 & 1.0000 & 0.0135 & $1.7194\times 10^{-4}$ & 2.18 \\ 
Left Hand        &  105,780 &  53,011 & 1.0000 & 0.0564 & $3.2248\times 10^{-3}$ & 2.61 \\ 
Knit Cap Man     &  118,849 &  59,561 & 1.0000 & 0.0161 & $2.5890\times 10^{-4}$ & 3.12 \\ 
Bimba Statue     &  433,040 & 216,873 & 1.0000 & 0.0167 & $2.7311\times 10^{-4}$ & 14.48 \\ 
Buddha           &  945,722 & 473,362 & 1.0000 & 0.0053 & $2.7495\times 10^{-5}$ & 41.30 \\ 
Nefertiti Statue &1,992,801 & 996,838 & 1.0000 & 0.0213 & $1.0074\times 10^{-3}$ & 75.48 \\ 
\hline 
\end{tabular}
\caption{The mean and SD of area ratios \eqref{eq:R_A}, the authalic energy \eqref{eq:E_A} and the computational time cost of the area-preserving mapping computed by the stretch energy minimization Algorithm \ref{alg:SEM}.}
\label{tab:SEM}
\end{table}

\section{Concluding remarks}
\label{sec:CR}

In this paper, we demonstrate the theoretical foundation of the stretch energy minimization for the computation of area-preserving mappings, including the derivation of a neat closed-form formulation of the gradient, the geometric interpretation of the stretch energy, and the proof of the minimum value occurs only at area-preserving mappings. 
Numerical experiments are demonstrated to show the effectiveness and accuracy of the stretch energy minimization for the computation of area-preserving mapping of simplicial surfaces. 
The derivation can be generalized to the volumetric stretch energy minimization for volume-preserving mappings of simplicial $3$-complexes, which is our future topic to investigate.

\paragraph{Acknowledgement}
The work of the author was partially supported by the Ministry of Science and Technology under grant numbers 109-2115-M-003-010-MY2 and 110-2115-M-003-014-, the National Center for Theoretical Sciences, the ST Yau Center in Taiwan, and the Ministry of Education.

\bibliographystyle{abbrv}
\bibliography{reference}

\begin{thebibliography}{10}

\bibitem{AIM}
{Digital Shape Workbench - Shape Repository}.
\newblock \url{http://visionair.ge.imati.cnr.it/ontologies/shapes/}.
\newblock (2016).

\bibitem{Stanford}
{The Stanford 3D Scanning Repository}.
\newblock \url{http://graphics.stanford.edu/data/3Dscanrep/}.
\newblock (2016).

\bibitem{Sketchfab}
{Sketchfab}.
\newblock \url{https://sketchfab.com/}, (2019).

\bibitem{ChRy18}
G.~P.~T. Choi and C.~H. Rycroft.
\newblock Density-equalizing maps for simply connected open surfaces.
\newblock {\em SIAM J. Imag. Sci.}, 11(2):1134--1178, 2018.

\bibitem{DeMA02}
M.~Desbrun, M.~Meyer, and P.~Alliez.
\newblock Intrinsic parameterizations of surface meshes.
\newblock {\em Comput. Graph. Forum}, 21(3):209--218, 2002.

\bibitem{DoTa10}
A.~Dominitz and A.~Tannenbaum.
\newblock Texture mapping via optimal mass transport.
\newblock {\em IEEE T. Vis. Comput. Graph.}, 16(3):419--433, May 2010.

\bibitem{FlHo05}
M.~S. Floater and K.~Hormann.
\newblock Surface parameterization: a tutorial and survey.
\newblock In {\em Advances in Multiresolution for Geometric Modelling}, pages
  157--186. Springer Berlin Heidelberg, 2005.

\bibitem{HoLe07}
K.~Hormann, B.~L{\'e}vy, and A.~Sheffer.
\newblock Mesh parameterization: Theory and practice.
\newblock In {\em ACM SIGGRAPH Course Notes}, 2007.

\bibitem{Hutc91}
J.~E. Hutchinson.
\newblock Computing conformal maps and minimal surfaces.
\newblock {\em Proc. Centre Math. Appl.}, 26:140--161, 1991.

\bibitem{KuLY21}
Y.-C. Kuo, W.-W. Lin, M.-H. Yueh, and S.-T. Yau.
\newblock Convergent conformal energy minimization for the computation of disk
  parameterizations.
\newblock {\em SIAM J. Imag. Sci.}, 14(4):1790--1815, 2021.

\bibitem{SaSG01}
P.~V. Sander, J.~Snyder, S.~J. Gortler, and H.~Hoppe.
\newblock Texture mapping progressive meshes.
\newblock In {\em Proceedings of the 28th Annual Conference on Computer
  Graphics and Interactive Techniques}, SIGGRAPH '01, pages 409--416, New York,
  NY, USA, 2001. ACM.

\bibitem{ShPR06}
A.~Sheffer, E.~Praun, and K.~Rose.
\newblock Mesh parameterization methods and their applications.
\newblock {\em Found. Trends. Comp. Graphics and Vision.}, 2(2):105--171, 2006.

\bibitem{SuCQ16}
K.~Su, L.~Cui, K.~Qian, N.~Lei, J.~Zhang, M.~Zhang, and X.~D. Gu.
\newblock Area-preserving mesh parameterization for poly-annulus surfaces based
  on optimal mass transportation.
\newblock {\em Comput. Aided Geom. D.}, 46:76 -- 91, 2016.

\bibitem{YoBS04}
S.~Yoshizawa, A.~Belyaev, and H.~P. Seidel.
\newblock A fast and simple stretch-minimizing mesh parameterization.
\newblock In {\em Proceedings Shape Modeling Applications, 2004.}, pages
  200--208, June 2004.

\bibitem{Yueh17}
M.-H. Yueh.
\newblock {\em Surface Conformal and Equiareal Parameterizations with
  Applications}.
\newblock PhD thesis, National Chiao Tung University, 2017.

\bibitem{YuHL20}
M.-H. Yueh, H.-H. Huang, T.~Li, W.-W. Lin, and S.-T. Yau.
\newblock Optimized surface parameterizations with applications to {C}hinese
  virtual broadcasting.
\newblock {\em Electron. Trans. Numer. Anal.}, 53:383--405, 2020.

\bibitem{YuLL19}
M.-H. Yueh, T.~Li, W.-W. Lin, and S.-T. Yau.
\newblock A novel algorithm for volume-preserving parameterizations of
  3-manifolds.
\newblock {\em SIAM J. Imag. Sci.}, 12(2):1071--1098, 2019.

\bibitem{YuLL20}
M.-H. Yueh, T.~Li, W.-W. Lin, and S.-T. Yau.
\newblock A new efficient algorithm for volume-preserving parameterizations of
  genus-one 3-manifolds.
\newblock {\em SIAM J. Imag. Sci.}, 13(3):1536--1564, 2020.

\bibitem{YuLW17}
M.-H. Yueh, W.-W. Lin, C.-T. Wu, and S.-T. Yau.
\newblock An efficient energy minimization for conformal parameterizations.
\newblock {\em J. Sci. Comput.}, 73(1):203--227, 2017.

\bibitem{YuLW19}
M.-H. Yueh, W.-W. Lin, C.-T. Wu, and S.-T. Yau.
\newblock A novel stretch energy minimization algorithm for equiareal
  parameterizations.
\newblock {\em J. Sci. Comput.}, 78(3):1353--1386, Mar 2019.

\bibitem{ZhSG13}
X.~Zhao, Z.~Su, X.~D. Gu, A.~Kaufman, J.~Sun, J.~Gao, and F.~Luo.
\newblock Area-preservation mapping using optimal mass transport.
\newblock {\em IEEE T. Vis. Comput. Gr.}, 19(12):2838--2847, 2013.

\bibitem{ZoHG11}
G.~Zou, J.~Hu, X.~Gu, and J.~Hua.
\newblock Authalic parameterization of general surfaces using {L}ie advection.
\newblock {\em IEEE T. Vis. Comput. Graph.}, 17(12):2005--2014, 2011.

\end{thebibliography}

\end{document}